\title{To report or not to report:\\
Optimal claim reporting in a bonus-malus system}
\date{\normalsize January 2026}
\author{Lea Enzi and Stefan Thonhauser\thanks{Institute of Statistics, Graz University of Technology.\\\Letter~stefan.thonhauser@tugraz.at} }
\newtheorem{thm}{Theorem}[section]
\newtheorem{lemma}{Lemma}[section]
\newtheorem{corollary}{Corollary}[section]
\theoremstyle{remark}
\newtheorem{remark}{Remark}[section]
\theoremstyle{definition}
\numberwithin{equation}{section}
\newcommand*\diff{\mathop{}\!\mathrm{d}}
\newcommand{\ind}{\mathbbm{1}_}
\definecolor{blue}{rgb}{0.0, 0.53, 0.74}
\begin{document}
\maketitle	

\section*{Abstract}
We study an optimal claim reporting problem in a bonus-malus setting. We assume, that the insurance contract consists of two regimes, where reporting a claim leads to a transition to a higher-premium regime, whereas remaining claim-free for a prespecified time period results in a shift to the lower premium regime. The insured can decide whether or not to report an occurred claim.
We formulate this as an optimal control problem, where the policyholder follows a barrier-type reporting strategy, with the goal of maximizing the expected value of a function of their terminal wealth.
We show that the associated value function is the unique viscosity solution to a system of Hamilton-Jacobi-Bellman equations. This characterization allows us to compute numerical approximations of the optimal barrier strategies.

\section{Introduction}
The bonus-malus system is a pricing method commonly used in car insurance and is designed to adjust premiums based on a policyholder's claims history. Its structure is typically modeled as a discrete-state system, where each state or class corresponds to a specific premium level. Transitions between classes depend on the number of claims incurred: policyholders who remain claim-free within a defined period move toward lower-premium classes, while reporting a claim results in a shift to a higher-premium class. An overview on bonus-malus systems is given by \cite{Lemaire.1998}, who also provides a framework for the implementation in actuarial science.

Actually, being “claim-free” does not necessarily imply the complete absence of loss events. In practice, policyholders may choose not to report certain claims, especially when, from their perspective, the premium increase from entering a higher class exceeds potential future losses. Such strategic non-reporting preserves the current class and is often referred to as ``hunger for bonuses''. 

As part of the decision-making process, barrier strategies can serve as a tool to decide if a claim should be reported. This means, that the policyholder only reports a claim if its value exceeds a certain barrier level. While \cite{Straub} analyzes constant barrier strategies that remain fixed within a period, \cite{Lanzenauer1974} motivates the use of optimal barrier functions depending on the current state, the number of already reported claims and when the decision is to be made throughout the year. Whether a particular reporting strategy corresponding to a barrier is optimal, depends on the choice of the underlying objective functional, such as minimizing expected future costs, maximizing wealth, etc. 

In the literature on optimal claim reporting, two main approaches exist: in a discrete-time model, a switch to a new class occurs at the end of a specified period (e.g., one year), whereas in a continuous time model, the change happens immediately upon a claim being reported.  A discrete time setting based on a Markov chain model is, for example, discussed by  \cite{charpentier2017}.
\cite{DePril1979} extends the setting of \cite{Lanzenauer1974} to the continuous case. \cite{ZacksLevikson2004} analyze a discrete and a continuous time framework, where they search for a constant optimal barrier, depending only on the state. A frequently adopted assumption in the literature, reflecting real-world practice, is that the policyholder transitions to a better class if they remain claim-free for a specified time.  

In a continuous time setting with two premium relevant states and a random time horizon, \cite{ Young2025_meanvariance,Young2025} find an optimal barrier by finding solutions to corresponding differential equations. They assume that the insurance holder immediately transitions to a better class whenever an occurring claim is not reported. This implies that the insurer has full information about all (reported and not reported) claims.  

In this contribution, we consider a continuous model with two possible states and assume a finite time horizon, i.e., the contract terminates at some time $T>0$. The policyholder transitions to the better class, whenever they do not report a claim for a predetermined period. The objective is to find an optimal barrier type strategy, with the goal of maximizing the expected value of a function of their wealth at maturity. To solve this, we apply stochastic control theory. Our approach combines elements from aforementioned references, in particular by extending the optimal control framework to include the transition to a lower premium class when claim-free.

The outline of this chapter is as follows: in Section \ref{sec:ModelSetup}, we define the underlying model and specify the control problem. In Section \ref{sec:valueFunction}, we show regularity of the value function. In Section \ref{sec:HJB}, we show that the value function is a viscosity solution to the corresponding system of Hamilton-Jacobi-Bellman equations. In Section \ref{sec:Numerics}, we solve the equation numerically and find a corresponding Markovian control.

\section{Model setup}
\label{sec:ModelSetup}
We consider an insurance buyer whose aggregated claim process is given by a compound Poisson process $Z=(Z_t)_{t\geq 0}$, i.e.,
\begin{equation*}
    Z_t = \sum_{j=1}^{N_t}Y_j.
\end{equation*}
Here, $N=(N_t)_{t\geq 0}$ is a Poisson process with intensity $\lambda>0$ and jump times $\{T_j\}_{j \in \mathbb{N}}$, and $\{Y_j\}_{j \in \mathbb{N}}$ are i.i.d.~random variables with continuous distribution function $F_Y$ with $F_Y(0)=0$, independent of $N$. Further, we assume that $\mathbb{E}[Y^2]<\infty$.

The insurance contract distinguishes between two insurance classes:
\[
\begin{aligned}
C_1 &\text{ with premium function } \pi_1, \\
C_2 &\text{ with premium function } \pi_2,
\end{aligned}
\]
where $\pi_1$ and $\pi_2$ are specified later. We assume that the contract terminates at some time $T>0$. 
Furthermore, we assume that if the policyholder is in $C_1$ and reports a claim, they are immediately reclassified to $C_2$ and charged a higher premium, meaning $\pi_2>\pi_1$. If they are already in class $C_2$ and no claim gets reported (or no claim occurs), they transition to $C_1$ after some time $\mathcal{S}>0$. On the other hand, the reporting of a claim results in a reset of the duration spent in $C_2$.

Furthermore, we assume that the insurance policy includes the possibility of a deductible $m_i$, which is modeled by a retention function $r$, given by 
\begin{equation*}
    r(y,m_i)=\min(y,m_i).
\end{equation*}

The policyholder wants to find an optimal claim reporting strategy, which maximizes their terminal wealth (or a function thereof). The control is implemented via a barrier strategy $b=(b_t)_{t\geq 0}$, which is a predictable process with respect to $( \mathcal{F}_t^Z )_{t \geq 0}$ which takes values in $\mathbb{R}^+_0$. This means that if a claim occurs at time $t$, the policyholder only reports it, if it exceeds $b_t$. We denote the set of all such admissible barrier strategies on $[t,T]$ by $\mathcal{B}(t)$ for all $0\leq t < T$. 

To track the current state the process is in, we introduce a process $I=(I_t)_{t \geq 0}$, where $I_t=k$ indicates that the policyholder is in class $C_k$ at time $t$, for $k \in \{1,2\}$. We suppress the dependence of $I$ on $b$ for notational convenience.

The controlled process $S^b=(S_t^b)_{t\geq 0}$, which tracks the time since the insurance buyer last reported a claim while remaining in the same insurance class, is given by
\begin{equation*}
\diff S_t^b=\diff t - S_{t-}^b \ind{\{Y_{N_t} > b_t\}}\diff N_t - S_{t-}^b \ind{\{S_{t-}^b=\mathcal{S}\}} \ind{\{I_{t-}=2\}}.
\end{equation*} 
The process is set to zero whenever a claim is reported, and it is also set to zero when it reaches $\mathcal{S}$ while the policyholder is in class 2. The latter corresponds to not reporting a claim for a sufficiently long period, resulting in a switch to class 1.

To allow for a premium reduction or bonus when remaining claim free, we include this in the premium functions $\pi_i(s,m_i)$, which we now assume to depend on the time since entering $C_i$ and the deductible $m_i \geq 0$. 

We say that insured's income rate is given by a constant $c$ which is larger than the premium rate. Then, the wealth process $X^b=(X_t^b)_{t\geq0}$ evolves according to the following dynamic:
\begin{equation*}
     \diff X^b_t= (c-\pi_{I_{t-}}(S_{t-}^b,m_{I_{t-}}))\diff t - \left(Y_{N_t} \ind{\{Y_{N_t} \leq b_t\}}+ r(Y_{N_t},m_{I_{t-}}) \ind{\{Y_{N_t} > b_t\}}\right) \diff N_t,
     \end{equation*}
with some initial wealth $x \in \mathbb{R}$. 

For some fixed $b \geq 0$, the underlying model $(I_t,t,S_t^b,X_t^b)$ is a \textit{piecewise deterministic Markov process} with two external states (classes) and an active boundary in the second state.

For the generator of the wealth process, we consider a function 
\begin{equation*}
    f = (f_1,f_2) \in \mathcal{D}(\mathcal{A}), \qquad f(i,t,s,x)=f_i(t,s,x),
\end{equation*} where $\mathcal{D}(\mathcal{A})$ is the domain of the generator. For $f$ to be in the domain of the generator, it must, in particular, be absolutely continuous along the deterministic flow. For more information on the domain of a generator, we refer to \citep{Davis} or \citep{Rolski}. 

Since the process has two possible states representing the current class the insured person is in, the generator acts component-wise, leading to a system of coupled equations:
\begin{align*}
    \mathcal{A}^b f_1(t,s,x)= &\left(\frac{\partial}{\partial t}+\frac{\partial}{\partial s}\right) f_1(t,s,x)+\left(c-\pi_1(s,m_1)\right)\frac{\partial}{\partial x}f_1(t,s,x) \nonumber \\
    &+ \lambda \int_{0}^{b}f_1(t,s,x-y)\diff F_Y(y) \nonumber \\
    &+ \lambda \int_{b}^{\infty}f_2(t,0,x-r(y,m_1))\diff F_Y(y) - \lambda f_1(t,s,x),\\
    \mathcal{A}^bf_2(t,s,x) = &\left(\frac{\partial}{\partial t}+\frac{\partial}{\partial s}\right) f_2(t,s,x)+\left(c-\pi_2(s,m_2)\right)\frac{\partial}{\partial x}f_2(t,s,x) \nonumber \\
     &+ \lambda \int_{0}^{b}f_2(t,s,x-y)\diff F_Y(y)  \nonumber \\
     &+ \lambda \int_{b}^{\infty}f_2(t,0,x-r(y,m_2))\diff F_Y(y)- \lambda f_2(t,s,x),
\end{align*}
with boundary condition
\begin{align*}
    f_2(t,\mathcal{S},x)=f_1(t,0,x), \quad t \in [\mathcal{S},T].
\end{align*}

The policyholder now aims to maximize their terminal wealth. We consider the functional
\begin{equation*}
    J(i,t,s,x,b)=\mathbb{E}_{i,t,s,x}[h(X^b_T)]=\mathbb{E}[h(X^b_T)|I_t=i,S_t^b=s,X_t^b=x],
\end{equation*}
where $i \in \{1,2\}$, $0\leq s \leq t \leq T$, $s \leq \mathcal{S}$ if $i=2$ and $x \in \mathbb{R}$. 
Further, $h:\mathbb{R}\rightarrow\mathbb{R}$ is a Lipschitz continuous function and $b \in \mathcal{B}(t)$ is an admissible barrier strategy. At time $T$, it holds that
\begin{equation*}
    J(i,T,s,x,b)=h(x).
\end{equation*}
We want to find 
\begin{align*}
    V(i,t,s,x)=\sup_{b \in \mathcal{B}(t)}J(i,t,s,x,b).
\end{align*}
As an explicit optimal control cannot be determined, we focus in the subsequent sections on the regularity and properties of the value function, thereby providing a basis for numerical computations.
\section{Regularity of the value function}
\label{sec:valueFunction}
To proceed with the theoretical results, we need a certain regularity of the value function. In particular, it should belong to the domain of the generator $\mathcal{D}(\mathcal{A)}$. We show continuity in the following lemma.
\begin{lemma}
    The map $(t,s,x) \mapsto V(i,t,s,x)$ is Lipschitz continuous for $i \in \{1,2\}$.
\end{lemma}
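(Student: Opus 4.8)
\emph{Overall approach.} The plan is to establish Lipschitz continuity separately in $x$, in $s$ and in $t$, in each case with a constant that does not depend on the admissible strategy, so that the estimate survives the supremum defining $V$; the three one-dimensional bounds then combine by the triangle inequality. Two facts are used throughout: the premium functions are bounded, say by $\bar\pi$, and (by the regularity assumptions that will accompany their specification) Lipschitz in $s$ with some constant $L_\pi$, and $h$ is Lipschitz with constant $L_h$. Moreover, since the retained amount at a claim never exceeds the claim size, one has the pathwise bound $|X_T^b|\le |x|+(\bar\pi+|c|)T+\sum_{T_j\le T}Y_j$, so that $|J(i,t,s,x,b)|\le |h(0)|+L_h\bigl(|x|+(\bar\pi+|c|)T+\lambda T\,\mathbb{E}[Y]\bigr)$ uniformly in $b$ (using $\mathbb{E}[Y]<\infty$); in particular $V$ grows at most linearly in $x$. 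Lipschitz continuity in $x$ is then immediate: for a fixed strategy and a fixed realisation of the claim arrivals and sizes the dynamics of $X^b$ do not involve $X^b$ itself, so two copies started from $x$ and $x'$ satisfy $X_T^b(x)-X_T^b(x')=x-x'$ pathwise; hence $|J(i,t,s,x,b)-J(i,t,s,x',b)|\le L_h|x-x'|$ and therefore $|V(i,t,s,x)-V(i,t,s,x')|\le L_h|x-x'|$.

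\emph{Lipschitz in $s$.} Fix $i$, $t$, $x$, a strategy $b$ and $s<s'$ (with $s'\le\mathcal{S}$ if $i=2$), and run two copies of the model driven by the same $b$ and the same claim process. Because $b$ is $(\mathcal{F}^Z_t)$-predictable, a claim is reported in one copy if and only if it is reported in the other, so the two pairs $(I,S)$ evolve in lockstep except possibly around the boundary: as long as the two copies occupy the same class their $S$-processes differ by the constant $s'-s$, and they re-couple permanently at the first commonly reported claim, or — when $i=2$ — once the boundary $\mathcal{S}$ has been crossed by both. A short case analysis shows that the interval on which the two copies sit in \emph{different} classes occurs at most once and has length at most $s'-s$. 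Consequently the wealth discrepancy at $T$ is controlled by: (i) the premium-rate difference on the synchronised-but-offset phases, at most $L_\pi(s'-s)$ per unit time, hence $O(s'-s)$ over $[t,T]$; and (ii) on the single different-class window, the bounded premium gap over a time span $\le s'-s$ together with, for each claim reported there, a retained-amount gap $|r(y,m_1)-r(y,m_2)|\le|m_1-m_2|$, whose expected total contribution is at most $\lambda|m_1-m_2|(s'-s)$. Since $h$ is Lipschitz this yields $|J(i,t,s,x,b)-J(i,t,s',x,b)|\le C_s|s-s'|$ with $C_s$ independent of $(i,t,x,b)$, and passing to the supremum gives the same bound for $V$; note this also delivers continuity of $V(2,\cdot)$ up to $s=\mathcal{S}$, consistently with $V(2,t,\mathcal{S},x)=V(1,t,0,x)$.

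\emph{Lipschitz in $t$.} Let $t<t'$. For $b\in\mathcal{B}(t)$ put $b':=b|_{[t',T]}\in\mathcal{B}(t')$; by the Markov property of the piecewise deterministic process, $J(i,t,s,x,b)=\mathbb{E}_{i,t,s,x}\bigl[J(I_{t'},t',S^b_{t'},X^b_{t'},b')\bigr]$. On the event that no claim occurs in $(t,t']$ and no boundary is crossed, the state at $t'$ is the deterministic point $(i,t',s+(t'-t),x+\Delta x)$ with $|\Delta x|\le(\bar\pi+|c|)(t'-t)$, so by the $s$- and $x$-estimates the integrand differs from $J(i,t',s,x,b')$ by $O(t'-t)$; on the complementary event one splits into ``at least one claim in $(t,t']$'', of probability $\le\lambda(t'-t)$, on which the at-most-linear growth of $J$ together with $\mathbb{E}\bigl[\sum_{T_j\in(t,t']}Y_j\bigr]=\lambda(t'-t)\,\mathbb{E}[Y]$ bounds the contribution by $O(t'-t)$, and ``a boundary is crossed with no claim'', on which the state at $t'$ lies in class $1$ and is handled via the identity $J(2,\cdot,\mathcal{S},\cdot,b')=J(1,\cdot,0,\cdot,b')$ combined with the $s$- and $x$-estimates. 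This gives $|J(i,t,s,x,b)-J(i,t',s,x,b')|\le C_t|t-t'|$ with $C_t$ independent of $b$; taking suprema over $b\in\mathcal{B}(t)$, and for the opposite inequality extending an arbitrary $b'\in\mathcal{B}(t')$ to $[t,t']$ in any admissible way, yields $|V(i,t,s,x)-V(i,t',s,x)|\le C_t|t-t'|$. (The $|x|$-dependence entering through the crude growth bound on the rare ``claim'' event can be removed by a further coupling showing that changing the initial class alters $J$ by at most a fixed constant, so $C_t$ can be taken uniform as well.)

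\emph{Conclusion and main difficulty.} Adding the three one-variable estimates via the triangle inequality yields joint Lipschitz continuity of $(t,s,x)\mapsto V(i,t,s,x)$ for each $i$. The step I expect to be the crux is the $s$-estimate, and within it the bookkeeping around the active boundary in class $2$: one must verify carefully that the two coupled copies spend only a single time interval, of length at most $|s-s'|$, in different classes, and that they genuinely re-synchronise afterwards — either through a commonly reported claim or through the boundary reset — since otherwise the offset could be amplified at successive boundary crossings. The $t$-estimate is conceptually routine but also needs care near the boundary and relies on the moment assumption on $Y$ to absorb the (non-uniformly bounded) terminal-reward growth on the rare event that a claim falls in $(t,t']$.
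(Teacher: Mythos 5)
Your proof is correct and uses the same overall decomposition as the paper --- establish Lipschitz estimates in $x$, in $s$ and in $t$ separately, uniformly over admissible controls, then combine by the triangle inequality --- but the $s$-step for $i=2$ is argued quite differently. The paper splits on three events according to whether the boundary $\mathcal{S}$ is reached before the first reported claim $\tau$; on the mixed event $\mathcal{E}_3$ (exactly one copy reaches the boundary) it does not show the terminal wealth gap is small, but rather bounds $\mathbb{P}(\mathcal{E}_3)\le\lambda|s_1-s_2|$ by summing Gamma densities and multiplies by a crude uniform moment bound. You instead observe directly from the coupling that the two copies occupy different classes during at most one interval, of length at most $|s_1-s_2|$, and bound the accumulated wealth gap pathwise on that interval, including the retention mismatch $|r(y,m_1)-r(y,m_2)|\le|m_1-m_2|$ at a reported claim. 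Your route is more transparent about the re-synchronisation mechanism and avoids the Gamma computation; the paper's is slightly more parsimonious in hypotheses, since it controls the premium-rate offset on the synchronised-but-offset phases by cancelling two short boundary integrals (so only boundedness of $c-\pi_i$ is needed), whereas you invoke a Lipschitz constant $L_\pi$ for $s\mapsto\pi_i(s,m_i)$ --- the paper's cancellation trick would also work in your argument. The $t$-step is the same conditioning on whether a jump falls in $(t,t']$; you make the boundary-crossing sub-case explicit (the paper suppresses it), and you correctly flag that the residual $|x|$-dependence from the linear-growth bound on the rare jump event must be removed, which the paper does in effect by bounding the terminal wealth gap pathwise rather than via growth of $J$.
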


\begin{proof}
    We start by fixing $i,t,s$ and consider $x_1,x_2 \in \mathbb{R}$. For some $\varepsilon$, let $b_\varepsilon^{x_1} \in \mathcal{B}(t)$ be an $\varepsilon$-optimal strategy corresponding to $x_1$. Then, this control is also an admissible control of the process starting in $x_2$. Therefore, the processes move in parallel until time $T$, i.e.,
    \begin{align*}
        V(i,t,s,x_1)-V(i,t,s,x_2)   &\leq L_h \mathbb{E} \left[\big |X_T^{x_1,b_\varepsilon^{x_1}} - X_T^{x_2,b_\varepsilon^{x_1}}\big|\right]+\varepsilon\\
                                &\leq L_h |x_1-x_2|+\varepsilon,
    \end{align*}
    where $L_h$ is the Lipschitz constant of the function $h$. Equivalently, we can bound $V(i,t,s,x_2)-V(i,t,s,x_1)$ and therefore 
    \begin{equation*}
        |V(i,t,s,x_1)-V(i,t,s,x_2)| \leq L_h |x_1-x_2|+\varepsilon.
    \end{equation*}\\
    Now, we fix $t,x$ and consider $s_1,s_2 \in \mathbb{R}^+$ with $s_1, s_2 \leq t$. We start with the case $i=1$. We choose an $\varepsilon$-optimal control $b^{\varepsilon, s_1}$ corresponding to $V(1,t,s_1,x)$. It is impossible to switch from state 1 to state 2 unless a (high enough) claim occurs. As soon as they transition to state 2, the time spent in the state gets set to zero. Let 
    \begin{equation*}
      \tau =  \inf_{i \in \mathbb{N}}\left\{T_i:T_i \geq t, Y_i>b_{T_i}^{\varepsilon, s_1}\right\}. 
    \end{equation*}
    
    Then,
    \begin{align}
     \label{eq:continuity_in_s}
        &\mathbb{E}\left[|X_T^{s_1,b^{\varepsilon, s_1}} - X_T^{s_2,b^{\varepsilon, s_1}}| \right] \nonumber\\
        &\quad= \mathbb{E}\left[\left| \int_{s_1}^{s_1+(\tau \wedge T)-t} (c-\pi_1(s,m_1))\diff s - \int_{s_2}^{s_2+(\tau \wedge T)-t} (c-\pi_1(s,m_1))\diff s\right|\right] \nonumber\\
        &\quad =  \mathbb{E}\left[\biggl|\int_{s_1 \wedge s_2}^{s_1 \vee s_2} (c-\pi_1(s,m_1)) \diff s - \int_{s_1\wedge s_2+(\tau \wedge T)-t}^{s_1 \vee s_2+(\tau \wedge T)-t} (c-\pi_1(s,m_1)) \diff s \biggr|\right] \nonumber \\
        &\quad \leq 2c|s_1-s_2|.
    \end{align}
    Therefore,
    \begin{align*}
        V(1,t,s_1,x)-V(1,t,s_2,x) \leq 2 L_h c |s_1-s_2|+\varepsilon.
    \end{align*}
    Again, a similar bound holds for $V(1,t,s_2,x)-V(1,t,s_1,x)$ and therefore,
    \begin{equation*}
        |V(1,t,s_1,x)-V(1,t,s_2,x)| \leq 2 L_h c  |s_1-s_2|+\varepsilon.
    \end{equation*}
    For $i=2$, not reporting a claim for a total time $\mathcal{S}$ leads to a deterministic transition to class $C_1$; hence, due to the presence of an active boundary, this case requires more careful consideration.
    Let again $b_\varepsilon^{s_1}$ be an $\varepsilon$-optimal control corresponding to $V(1,t,s_1,x)$. Further, let $s_1 \geq s_2$ w.l.o.g. and $\tau$ be defined as before. We consider three different cases:
    \begin{enumerate}
        \item 
        Let $\mathcal{E}_1=\{\omega \in \Omega|s_1 + (\tau(\omega) \wedge T)-t \leq \mathcal{S}\}$.
        Then,
        \begin{align*}
            &\mathbb{E}\left[\ind{\mathcal{E}_1}|X_T^{s_1,b_\varepsilon^{s_1}} - X_T^{s_2,b_\varepsilon^{s_1}}| \right] \\
            & \quad = \mathbb{E}\left[\ind{\mathcal{E}_1}\left|\int_{s_1}^{s_1 + \tau\wedge T - t}(c- \pi_2(s,m_2))\diff s - \int_{s_2}^{s_2 + \tau\wedge T - t}(c- \pi_2(s,m_2))\diff s\right|\right]\\
            &\quad \leq 2 L_h c|s_1-s_2|,
        \end{align*}
        by calculations similar to \eqref{eq:continuity_in_s}.
        \item 
        Let $\mathcal{E}_2=\{\omega \in \Omega|s_2 + (\tau(\omega) \wedge T)-t \geq \mathcal{S}\}$. Then,
         \begin{align*}
            &\mathbb{E}\left[\ind{\mathcal{E}_2}|X_T^{s_1,b_\varepsilon^{s_1}} - X_T^{s_2,b_\varepsilon^{s_1}}|\right] \leq \left|\int_{s_1}^{\mathcal{S}}(c- \pi_2(s,m_2))\diff s - \int_{s_2}^{\mathcal{S}}(c- \pi_2(s,m_2))\diff s\right|\\
            &+ \mathbb{E}\left[\ind{\mathcal{E}_2}\left|\int_{0}^{\tau \wedge T-t-(\mathcal{S}-s_1)}(c- \pi_1(s,m_1))\diff s - \int_{0}^{\tau \wedge T-t-(\mathcal{S}-s_2)}(c- \pi_1(s,m_1))\diff s\right|\right]\\
            &= \left|\int_{s_2}^{s_1}c-\pi_2(s,m_2)\diff s\right|
            + \mathbb{E}\left[\ind{\mathcal{E}_2}\left|\int_{\tau \wedge T-t-(\mathcal{S}-s_2)}^{\tau \wedge T-t-(\mathcal{S}-s_1)}(c- \pi_1(s,m_1))\diff s\right|\right]\\
         &  \leq 2c|s_1-s_2|.
        \end{align*}
        
        \item 
        Let $\mathcal{E}_3=\{\omega \in \Omega|s_2 + (\tau(\omega) \wedge T)-t \leq \mathcal{S} \leq s_1 + (\tau(\omega) \wedge T)-t\}$.
            We consider
            \begin{align*}
                &\mathbb{E}\left[ |X_T^{s_1,b_\varepsilon^{s_1}} - X_T^{s_2,b_\varepsilon^{s_1}}|\ind{\mathcal{E}_3}\right]=
                \mathbb{E}\left[ |X_T^{s_1,b_\varepsilon^{s_1}} - X_T^{s_2,b_\varepsilon^{s_1}}|\ind{\{\mathcal{S}-s_1\leq \tau\wedge T-t \leq \mathcal{S}-s_2\}}\right] \\
                &=\mathbb{E}\left[ |X_T^{s_1,b_\varepsilon^{s_1}} - X_T^{s_2,b_\varepsilon^{s_1}}|\ind{\{\mathcal{S}-s_1\leq \tau-t \leq \mathcal{S}-s_2\}}\ind{\{\tau \leq T\}}\right] \\
                &\quad + \mathbb{E}\left[ |X_T^{s_1,b_\varepsilon^{s_1}} - X_T^{s_2,b_\varepsilon^{s_1}}|\ind{\{\mathcal{S}-s_1\leq T-t \leq \mathcal{S}-s_2\}}\ind{\{\tau > T\}}\right]\\
            \end{align*}
            We start with the first summand.
            \begin{align*}
                &\mathbb{E}\left[ |X_T^{s_1,b_\varepsilon^{s_1}} - X_T^{s_2,b_\varepsilon^{s_1}}|\ind{\{\mathcal{S}-s_1\leq \tau-t \leq \mathcal{S}-s_2\}}\ind{\{\tau \leq T\}}\right] \\
                &= \mathbb{E}\left[ |X_T^{s_1,b_\varepsilon^{s_1}} - X_T^{s_2,b_\varepsilon^{s_1}}|\bigg \vert \mathcal{S}-s_1\leq \tau-t \leq (\mathcal{S}-s_2) \wedge (T-t)\right]\\
                &\hfill \mathbb{P}(\mathcal{S}-s_1\leq \tau-t \leq  (\mathcal{S}-s_2) \wedge (T-t))
            \end{align*}

            Since the expectation can be bounded in terms of the maximal drift and the expected jump size together with the expected number of jumps until time $T$, we focus on the probability.
            \begin{align*}
                &\mathbb{P}(\mathcal{S}-s_1\leq \tau-t \leq  (\mathcal{S}-s_2) \wedge (T-t)) 
                \\& \qquad= \sum_{i=1}^{\infty} \mathbb{P}(\mathcal{S}-s_1\leq \tau -t\leq  (\mathcal{S}-s_2) \wedge (T-t)|\tau=T_i)\mathbb{P}(\tau=T_i)\\
                & \qquad \leq \sum_{i=1}^{\infty}\mathbb{P}(\mathcal{S}-s_1\leq T_i-t \leq  (\mathcal{S}-s_2) \wedge (T-t))\\
                & \qquad = \sum_{i=1}^{\infty} \int_{\mathcal{S}-s_1}^{ (\mathcal{S}-s_2) \wedge (T-t)} \frac{u^{i-1}\lambda^i e^{-\lambda u}}{\Gamma(i)}\diff u,\\
                & \qquad = \int_{\mathcal{S}-s_1}^{ (\mathcal{S}-s_2) \wedge (T-t)} \sum_{i=1}^{\infty}\frac{u^{i-1}\lambda^i e^{-\lambda u}}{\Gamma(i)} \diff u\\
                & \qquad = \lambda( (\mathcal{S}-s_2) \wedge (T-t)-(\mathcal{S}-s_1))\\
                & \qquad \leq \lambda(s_1-s_2),
            \end{align*}
            by using that the sum of i.i.d.~exponentially distributed random variables is Gamma distributed and Tonelli's theorem. \\
            Now, we consider the second summand.
            \begin{align*}
                 &\mathbb{E}\left[ |X_T^{s_1,b_\varepsilon^{s_1}} - X_T^{s_2,b_\varepsilon^{s_1}}|\ind{\{\mathcal{S}-s_1\leq T-t \leq \mathcal{S}-s_2\}}\ind{\{\tau > T\}}\right]\\
                 &\leq \bigg|\int_{s_1}^\mathcal{S} (c-\pi_2(s,m_2))\diff s+ \int_0^{(T-t)-(\mathcal{S}-s_1)}(c-\pi_1(s,m_1))\diff s \\
                & \quad- \int_{s_2}^{s_2 + (T-t)} (c-\pi_2(s,m_2)) \diff s \bigg|\\
                &= \bigg| \int_{s_2 + (T-t)}^{\mathcal{S}} (c-\pi_2(s,m_2)) \diff s - \int_{s_2}^{s_1} (c-\pi_2(s,m_2)) \diff s\\
                &+ \int_0^{(T-t)-(\mathcal{S}-s_1)}(c-\pi_1(s,m_1))\diff s \bigg| \\
                &\leq 2c\left(s_1-s_2 \right).
            \end{align*}
    \end{enumerate}
Therefore, we can find a constant $\tilde{K}>0$ such that
\begin{equation*}
    V(2,t,s_1,x)-V(2,t,s_2,x) \leq \tilde{K}|s_1-s_2|+\varepsilon.
\end{equation*}

Again, something similar holds for $V(2,t,s_2,x)-V(2,t,s_1,x)$ and therefore we can find a constant $K>0$, 
\begin{equation*}
    |V(2,t,s_1,x)-V(2,t,s_2,x)| \leq K|s_1-s_2|+\varepsilon.
\end{equation*}

Left to show is the case where we fix $i,s,x$ and consider times $t_1<t_2$. By choosing an $\varepsilon$-optimal control $b^\varepsilon \in \mathcal{B}(t_1)$. We define $\mathcal{E}=\{w \in \Omega | T_1(\omega)>t+(t_2-t_1)\}$. Then,
    \begin{align*}
        V(i,t_1,s,x)-V(i,t_2,s,x) \leq L_h \mathbb{E}\biggl[\ind{\mathcal{E}} \int_s^{s+(t_2-t_1)} (c-\pi_{i}(v,m_{i})) \diff v\\ + \ind{\mathcal{E}^c} (cT+\sum_{i=1}^{N_{(T-t_2)}} Y_i+r(Y_i,m_{I_{\tilde{T}_{i}-}}))\biggr] + \varepsilon\\
        \leq L_h c(t_2-t_1) \mathbb{P}(\mathcal{E}) + \mathbb{P}(\mathcal{E}^c)(cT+2\lambda T \mu)+\varepsilon.
    \end{align*}
Since $\mathbb{P}(\mathcal{E}) \leq 1 $ and  $\mathbb{P}(\mathcal{E}^c)=\mathbb{P}(T_1-t\leq(t_2-t_1))=1-e^{-\frac{(t_2-t_1)}{\mu}}\rightarrow 0$ for $t_2\rightarrow t_1$, the whole term vanishes for $t_2\rightarrow t_1$. On the other hand, let $\overline{b}^{\varepsilon} \in \mathcal{B}(t_2)$ be an $\varepsilon$-optimal control and $b \in \mathcal{B}(t_1)$ an arbitrary control. Then, define $\tilde{b}_t=b_t$ for $t < t_2$ and $\tilde{b}_t=\overline{b}^{\varepsilon}_t \ind{\{T_1\geq t_2\}}$+ $b_t \ind{\{T_1< t_2\}}$ for $t \geq t_2$. Now, similar to before
\begin{align*}
       V(i,t_1,s,x)-V(i,t_2,s,x) \leq L_h c(t_2-t_1) + \mathbb{P}(\mathcal{E}^c)(cT+2\lambda T \mu)+\varepsilon,
    \end{align*}
which again tends to zero if $t_2 \rightarrow t_1$.\\
Altogether, there is a constant $M$ such that
\begin{align*}
    &|V(i,t_1,s_1,x_1)-V(i,t_2,s_2,x_2)|\leq |V(i,t_1,s_1,x)-V(i,t_2,s_1,x)|\\&\qquad +|V(i,t_2,s_1,x)-V(i,t_2,s_2,x)|+|V(i,t_2,s_2,x_1)-V(i,t_2,s_2,x_2)|\\&\leq M \Vert(t_1,s_1,s_1)-(t_2,s_2,x_2)\Vert_1 + 3\varepsilon,
\end{align*}
where $\Vert\cdot\Vert_1 $ denotes the classical $\ell_1$-norm.
The function is Lipschitz continuous because $\varepsilon$ is arbitrary and each difference can be bounded independently of the other fixed variables.
\end{proof}

Given the continuity of the value function, it follows by standard arguments that a dynamic programming principle is satisfied --- see e.g. \citep{FlemingSoner2006} for general theory on controlled Markov processes, or \citep{Schmidli2008} in an insurance context.

\begin{corollary}[Dynamic Programming Principle]
    \label{cor:DPP}
    For any bounded stopping time $\tau$, it holds that
    \begin{align}
        V(i,t,s,x)= \sup_{b \in \mathcal{B}(t)} \mathbb{E}_{i,t,s,x}\left[ V(I_\tau,\tau,S_\tau^b,X_\tau^b) \ind{\{T>\tau\}} + h(X_T^b) \ind{\{T\leq\tau\}}\right].
    \end{align}
\end{corollary}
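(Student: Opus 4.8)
The plan is to establish the two inequalities ``$\le$'' and ``$\ge$'' separately, which is the classical route to a dynamic programming principle; see \citep{FlemingSoner2006} for the general theory and \citep{Schmidli2008} for arguments of this type in an insurance context. In both directions the key structural fact is that, for each fixed $b$, the augmented process $(I_t,t,S_t^b,X_t^b)$ is a strong Markov process, being a piecewise deterministic Markov process in the sense of \citep{Davis}. I would also record at the outset that the estimates used in the proof of the preceding lemma in fact yield Lipschitz continuity of $(t,s,x)\mapsto J(i,t,s,x,b)$ with a constant independent of $b$, since those coupling arguments compare two copies of the wealth process driven by the \emph{same} control.

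For ``$\le$'', fix $b\in\mathcal{B}(t)$ and condition the terminal expectation on $\mathcal{F}_\tau$. On the event $\{T>\tau\}$ the strong Markov property identifies $\mathbb{E}_{i,t,s,x}[h(X_T^b)\mid\mathcal{F}_\tau]$ with the payoff $J(I_\tau,\tau,S_\tau^b,X_\tau^b,\tilde b)$ of the post-$\tau$ problem, where $\tilde b$ is the restriction of $b$ to $[\tau,T]$ (admissible from time $\tau$ on); this is bounded above by $V(I_\tau,\tau,S_\tau^b,X_\tau^b)$. On $\{T\le\tau\}$ the quantity $h(X_T^b)$ is already $\mathcal{F}_\tau$-measurable. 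Taking expectations and then the supremum over $b\in\mathcal{B}(t)$ gives ``$\le$''.

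For ``$\ge$'' I would use the standard concatenation argument. Fix $\varepsilon>0$ and an arbitrary $b^0\in\mathcal{B}(t)$. Using the uniform-in-$b$ Lipschitz continuity noted above together with the continuity of $V$, partition the relevant part of the state space $\{1,2\}\times[0,T]\times[0,\mathcal{S}]\times\mathbb{R}$ into countably many Borel cells $(D_k)_k$ on each of which $V$ and every $J(i,\cdot,\cdot,\cdot,b)$ oscillate by at most $\varepsilon$; in each cell choose a reference point $z_k$ and an $\varepsilon$-optimal continuation control $b^{(k)}$ for $z_k$. Let $b^\star$ follow $b^0$ on $[t,\tau)$ and, on $\{T>\tau\}$, switch at $\tau$ to the time-shifted control $b^{(k)}$ indexed by the cell containing $(I_\tau,\tau,S_\tau^{b^0},X_\tau^{b^0})$, so that $X_T^{b^\star}=X_T^{b^0}$ on $\{T\le\tau\}$. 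Since $\tau$ is an $(\mathcal{F}_t^Z)_{t\ge0}$-stopping time and the switch depends only on $\mathcal{F}_\tau$-measurable data, $b^\star$ is predictable, hence admissible, and the strong Markov property at $\tau$ gives
\begin{equation*}
    V(i,t,s,x)\ge J(i,t,s,x,b^\star)\ge \mathbb{E}_{i,t,s,x}\bigl[V(I_\tau,\tau,S_\tau^{b^0},X_\tau^{b^0})\ind{\{T>\tau\}}+h(X_T^{b^0})\ind{\{T\le\tau\}}\bigr]-3\varepsilon ,
\end{equation*}
the constant $3$ coming from the three comparisons state-vs-reference, $\varepsilon$-optimality at the reference, and value-vs-reference. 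Taking the supremum over $b^0\in\mathcal{B}(t)$ and letting $\varepsilon\downarrow0$ yields ``$\ge$''.

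The step I expect to be the main obstacle is making the concatenation fully rigorous: one must check that pasting barrier strategies at the random time $\tau$ preserves $(\mathcal{F}_t^Z)_{t\ge0}$-predictability, and that the assignment of continuation controls $\omega\mapsto b^{(k(\omega))}$ is measurable. Continuity of the value function is exactly what turns the second issue into a harmless countable choice over the partition $(D_k)_k$ --- this is why the Lipschitz lemma was established first --- and it is the content of the ``standard arguments'' invoked in the text.
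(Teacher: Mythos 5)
The paper does not spell out a proof of the dynamic programming principle: it only invokes continuity of $V$ and cites \citep{FlemingSoner2006} and \citep{Schmidli2008}. Your sketch supplies precisely those standard arguments --- the two-inequality decomposition via conditioning on $\mathcal{F}_\tau$, the regeneration (strong Markov) structure of the PDMP, the uniform-in-$b$ Lipschitz bound on $J(\cdot,b)$ read off from the preceding lemma, and the countable-partition concatenation to build a measurable $\varepsilon$-optimal control --- so it is a correct and faithful expansion of the approach the paper defers to.
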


\newpage
\section{Characterization via a system of HJB equations}
\label{sec:HJB}
The dynamic programming principle allows us to connect the value function to a system of Hamilton-Jacobi-Bellman equations. By verifying that $V$ is a viscosity solution to this system, a comparison theorem ensures its uniqueness and provides the basis for a numerical solution procedure. 

Let \begin{align*}
    D_1:=\{(t,s,x) \in [0,T] \times [0,T] \times \mathbb{R}\,|\,0 \leq s \leq t  \},\\
    D_2:= \{(t,s,x) \in [0,T] \times [0,\mathcal{S}] \times \mathbb{R}\,|\,0 \leq s \leq t  \},
\end{align*}
be the domains of the value function, depending on the initial class. 
\begin{thm}
    For $i \in \{1,2\}$, $V(i,t,s,x)$ is a viscosity solution to the system of equations
    \begin{align}
        &\begin{aligned}
        &\sup_{b\geq 0} \mathcal{A}^b v_1(t,s,x)=0,\ \qquad  t \in [0,T),\ s\in [0,t],\ x \in \mathbb{R}\label{eq:diffequation}\\&\sup_{b\geq 0} \mathcal{A}^b v_2(t,s,x)=0,\ \qquad t \in [0,T),\ s\in [0,t] \text{ and } s<\mathcal{S}, \ x \in \mathbb{R} 
        \end{aligned}\\
        &v_i(T,s,x)=h(x),\\
        &v_2(t,\mathcal{S},x)=v_1(t,0,x),
    \end{align}
    where $v_i: D_i \rightarrow \mathbb{R}$.
\end{thm}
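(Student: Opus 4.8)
The plan is to verify the viscosity subsolution and supersolution properties separately, using the dynamic programming principle of Corollary~\ref{cor:DPP}, and to handle the boundary/terminal conditions as part of the definition of the solution domain. The terminal condition $v_i(T,s,x)=h(x)$ holds by the definition of $J$ at time $T$; the internal boundary condition $v_2(t,\mathcal S,x)=v_1(t,0,x)$ holds because, by construction of the process $S^b$, reaching $S^b=\mathcal S$ in class $2$ forces an immediate deterministic transition to $(1,t,0,x)$, so the functionals agree there. It remains to establish the interior HJB relations in the viscosity sense.

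For the \emph{subsolution} property, I would fix $i$, a point $(t_0,s_0,x_0)$ in the interior of $D_i$, and a test function $\varphi \in \mathcal D(\mathcal A)$ with $\varphi \geq v_i$ locally and $\varphi(t_0,s_0,x_0)=v_i(t_0,s_0,x_0)$. Pick an arbitrary constant barrier $b\geq 0$ and apply the DPP with the deterministic stopping time $\tau = t_0+\delta$ (for $\delta$ small enough that the flow stays in the domain, using $s\le t$ and $s<\mathcal S$), together with the admissible strategy that uses barrier $b$ on $[t_0,\tau]$. Replacing $V$ by $\varphi$ on the right-hand side (legitimate by the local ordering) and using Dynkin's formula for the PDMP generator $\mathcal A^b$ applied to $\varphi$, dividing by $\delta$ and letting $\delta\downarrow 0$ yields $\mathcal A^b\varphi(t_0,s_0,x_0)\ge 0$; taking the supremum over $b\ge0$ gives $\sup_{b\ge0}\mathcal A^b\varphi(t_0,s_0,x_0)\ge0$, which is the subsolution inequality. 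The only subtlety here is justifying the interchange of Dynkin's formula with the jump terms of $\mathcal A^b$, which is routine since $\varphi$ is bounded on the relevant compact set and $\mathbb E[Y^2]<\infty$ controls the jump integrals.

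For the \emph{supersolution} property, fix a test function $\varphi\le v_i$ locally with equality at $(t_0,s_0,x_0)$. Using the DPP, choose for each $\delta$ a barrier strategy $b^\delta$ that is $\delta^2$-optimal (or use a measurable near-optimal selector) on $[t_0,t_0+\delta]$, stop at $\tau=t_0+\delta$, and again substitute $\varphi$ for $V$. Dynkin's formula along the controlled flow gives $0\ge \frac1\delta\mathbb E\!\left[\int_{t_0}^{\tau}\mathcal A^{b^\delta_u}\varphi\,du\right]+o(1)$; since each integrand is $\le \sup_{b\ge0}\mathcal A^b\varphi$ evaluated along a path converging to $(t_0,s_0,x_0)$, continuity of $(t,s,x,b)\mapsto\mathcal A^b\varphi$ and a dominated-convergence argument give $\sup_{b\ge0}\mathcal A^b\varphi(t_0,s_0,x_0)\le0$.

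The main obstacle is bookkeeping at the active boundary in state $2$: near $s=\mathcal S$ a test point may be such that the deterministic flow hits $\mathcal S$ within time $\delta$, triggering the transition to state $1$. I would deal with this by restricting attention to interior points with $s_0<\mathcal S$ and $s_0<t_0$, choosing $\delta$ small enough that no boundary hit occurs before $\tau$; the boundary relation $v_2(t,\mathcal S,x)=v_1(t,0,x)$ is then imposed directly as part of the system rather than derived, consistent with how the statement is formulated. A secondary point is the coupling between $v_1$ and $v_2$ through the jump term $\lambda\int_b^\infty v_2(t,0,x-r(y,m_1))\,dF_Y(y)$ in $\mathcal A^bv_1$: one must use the already-established continuity of $V$ (the preceding Lemma) to ensure this term is continuous in the test-point limit, so that $\mathcal A^b\varphi$ inherits the required continuity for the limiting arguments above.
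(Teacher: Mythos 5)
The structure of your proposal is sound in outline (terminal and matching conditions from the definition, interior inequalities via the DPP and Dynkin's formula), and you correctly identify the directions of the test functions: $\varphi\ge V$ for the subsolution inequality $\sup_b\mathcal A^b\varphi\ge 0$, and $\varphi\le V$ for the supersolution inequality $\sup_b\mathcal A^b\varphi\le 0$. However, you have swapped the two proof mechanisms, and with the assignments you wrote neither inequality chain closes.

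Concretely: in your \emph{subsolution} paragraph you take $\varphi\ge V$, $\varphi(t_0,s_0,x_0)=V(t_0,s_0,x_0)$, fix an \emph{arbitrary} constant barrier $b$, and ``replace $V$ by $\varphi$ on the right-hand side.'' But the DPP with a fixed $b$ gives only the one-sided bound $V(t_0,s_0,x_0)\ge\mathbb E[V(\tau,\cdot)]$, while $\varphi\ge V$ gives $\mathbb E[V(\tau,\cdot)]\le\mathbb E[\varphi(\tau,\cdot)]$; these two inequalities point in opposite directions, so the replacement is not legitimate and you cannot reach $\varphi(t_0,s_0,x_0)\le\mathbb E[\varphi(\tau,\cdot)]$. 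Moreover the claimed intermediate conclusion ``$\mathcal A^b\varphi(t_0,s_0,x_0)\ge0$ for the arbitrary $b$'' is simply false: the equation is $\sup_b\mathcal A^bV=0$, so each individual $\mathcal A^bV$ can be (and generically is) strictly negative. The subsolution inequality requires instead a \emph{near-optimal} control $b^\varepsilon$: the DPP then yields $V(t_0,s_0,x_0)\le\mathbb E[V(\tau,\cdot)|b^\varepsilon]+\varepsilon$, and combined with $V\le\varphi$ this gives $0\le\mathbb E[\int_{t_0}^\tau\mathcal A^{b^\varepsilon}\varphi\,du]+\varepsilon\le\mathbb E[\int_{t_0}^\tau\sup_b\mathcal A^b\varphi\,du]+\varepsilon$. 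Symmetrically, your \emph{supersolution} paragraph takes $\varphi\le V$ and a $\delta^2$-optimal $b^\delta$; the DPP then gives $V(t_0,s_0,x_0)\le\mathbb E[V(\tau,\cdot)|b^\delta]+\delta^2$, which combined with $V\ge\varphi$ again points the wrong way and does not produce the asserted ``$0\ge\frac1\delta\mathbb E[\int\mathcal A^{b^\delta}\varphi\,du]+o(1)$.'' The supersolution case must use an \emph{arbitrary} fixed $b$: $V(t_0,s_0,x_0)\ge\mathbb E[V(\tau,\cdot)|b]\ge\mathbb E[\varphi(\tau,\cdot)|b]$, then Dynkin gives $\mathcal A^b\varphi(t_0,s_0,x_0)\le0$, and the supremum over $b$ preserves this. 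Swapping the two mechanisms back recovers the paper's argument (with the paper handling the subsolution case by a contradiction on the size of $\sup_b\mathcal A^b\psi$, which avoids the $\varepsilon$-versus-$\delta$ bookkeeping; your $\delta^2$-optimal trick is a legitimate alternative once moved to the correct case). The paper also stops at $\tau=T_1\wedge t_S\wedge T_\vartheta\wedge t_h$ rather than the deterministic $\tau=t_0+\delta$, which lets it compute the expectation explicitly over the first jump and avoid the flow possibly crossing the active boundary $s=\mathcal S$; your remark about restricting to $s_0<\mathcal S$ and choosing $\delta$ small addresses the same issue adequately.
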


\begin{proof}

We start by showing that $V$ is a supersolution. Let $i \in \{1,2\}$ and $(t,s,x) \in D_i$ be fixed. Let $\varphi_i$ be a smooth function with $\varphi_i(t,s,x)=V(i,t,s,x)$ and $\varphi_i\leq V(i,\cdot,\cdot,\cdot)$. We have to show that 
\begin{equation*}
    \sup_{b\geq 0} \mathcal{A}^b \varphi_i(t,s,x)\leq 0.
\end{equation*}

Let $t_S=t+(\mathcal{S}-s)-\vartheta$ and $T_\vartheta=T-\vartheta$, for some $\vartheta>0$ and define $\tau=T_1 \wedge t_S \wedge T_\theta \wedge t_h$, where $t_h=t+h$, for some $h>0$. Further, let $\tilde{b}$ be a constant control. Then, by Corollary~\ref{cor:DPP},
\begin{align*}
    \varphi_i(t,s,x)    &=V(i,t,s,x)\\
                        &=\sup_{b \in \mathcal{B}(t)} \mathbb{E}_{i,t,s,x}\left[V(I_\tau,\tau,S_\tau^b,X_\tau^b)   \right]\\
                        &\geq \mathbb{E}_{i,t,s,x}\big[V(I_{T_1},T_1,S_{T_1}^{\tilde{b}},X_{T_1}^{\tilde{b}})\ind{\{T_1\leq t_S\wedge t_h\}} \\
                        & \qquad + V(I_{t_s \wedge t_h},t_s \wedge t_h,\mathcal{S}^{\tilde{b}}_{t_s \wedge t_h},X^{\tilde{b}}_{t_s \wedge t_h})\ind{\{T_1 > t_S\wedge t_h\}} \big].
\end{align*}
By using that $V(i,\cdot,\cdot,\cdot)\geq \varphi_i$,
\begin{align*}
     &\varphi_i(t,s,x)  \geq \mathbb{E}_{i,t,s,x}\bigg[\big( \ind {\{Y_1 \leq \tilde{b} \}}\varphi_i\left(T_1,s+(T_1-t),x+\int_{s}^{s+T_1-t}(c-\pi_i(u,m_i))\diff u-Y_1\right) \\
     & + \ind {\{Y_1 > \tilde{b} \}} \varphi_2\left(T_1,0,x+\int_{s}^{s+T_1-t}(c-\pi_i(u,m_i))\diff u-r(Y_1,m_i)    \right) \ind{\{T_1\leq t_S\wedge t_h\}} \\
                        & + \varphi_i\left(t_s \wedge t_h,s+t_s \wedge t_h-t,x+\int_{s}^{s+t_s \wedge t_h-t}(c-\pi_i(u,m_i))\diff u\right)\ind{\{T_1 > t_S\wedge t_h\}} \bigg]\\
     &= \int_{0}^{t_S\wedge t_h-t} \lambda e^{-\lambda v} \Bigg(\int_0^{\tilde{b}} \varphi_i\left(t+v,s+v,x+\int_{s}^{s+v}(c-\pi_i(u,m_i))\diff u-y  \right)\diff F_Y(y) \\
     &+ \int_{\tilde{b}}^\infty \varphi_2\left(t+v,0,x+\int_{s}^{s+v}(c-\pi_i(u,m_i))\diff u-r(y,m_i)  \right)\diff F_Y(y)\Bigg) \diff v  \\ 
     &+ e^{-\lambda (t_s \wedge t_h-t)} \varphi_i\left(t_s \wedge t_h,s+t_s \wedge t_h-t,x+\int_{s}^{s+t_s \wedge t_h-t}(c-\pi_i(u,m_i))\diff u\right).
\end{align*}
Note that in the second line there is always $\varphi_2$, because one either switches to state 2 or stays there. The above is equivalent to 
\begin{align*}
    &0\geq e^{-\lambda (t_s \wedge t_h-t)} \Biggl(\varphi_i\left(t_s \wedge t_h,s+t_s \wedge t_h-t,x+\int_{s}^{s+t_s \wedge t_h-t}(c-\pi_i(u,m_i))\diff u\right)\\&\qquad -\varphi_i(t,s,x)\Biggr) + (e^{-\lambda (t_s \wedge t_h-t)} -1)\varphi_i(t,s,x)\\
    &+\int_{0}^{t_S\wedge t_h-t} \lambda e^{-\lambda v} \Bigg(\int_0^{\tilde{b}} \varphi_i\left(t+v,s+v,x+\int_{s}^{s+v}(c-\pi_i(u,m_i))\diff u-y  \right)\diff F_Y(y) \\
     &+ \int_{\tilde{b}}^\infty \varphi_2\left(t+v,0,x+\int_{s}^{s+v}(c-\pi_i(u,m_i))\diff u-r(y,m_i)  \right)\diff F_Y(y)\Bigg) \diff v.
\end{align*}

If we divide by $h$, send $h \rightarrow 0$ and since this holds for all values of $\tilde{b}$, we get that $\sup_{b\geq 0} \mathcal{A}^b \varphi(i,t,s,x)\leq 0$.\\
Now we show that $V$ is a subsolution. Let $\psi_i$ be a smooth function with\\ $\psi_i(t,s,x) ~=~V(i,t,s,x)$ and $\psi_i\geq V(i,\cdot,\cdot,\cdot)$. For notational simplicity, we write $\overline{x}=(t,s,x)$. We have to show, that $\sup_{b\geq 0} \mathcal{A}^b \psi_i(\overline{x})\geq 0$. We assume that 
\begin{equation*}
    \sup_{b\geq 0} \mathcal{A}^b \psi_i(\overline{x})\leq -\eta < 0,
\end{equation*} for some $\eta>0$ which means that $\mathcal{A}^b \psi_i(\overline{x}) \leq -\frac{\eta}{2}$ for all $b\geq 0$. Since $\psi_i$ is continuously differentiable, there is some $\delta>0$, such that for all $b\geq 0$,
\begin{equation*}
   \mathcal{A}^b \psi_i(\tilde{x})\leq -\frac{\eta}{2}, \qquad \forall \tilde{x}\in B_\delta(\overline{x}).
\end{equation*}
We define $t_\delta=t+\frac{\delta}{\sqrt{(2+c^2)}}$ and $t_T=T-\vartheta$ for some $\vartheta>0$ and consider $\tau = t_\delta \wedge t_T \wedge T_1$. Let $\varepsilon>0$. Then there is some $\varepsilon$-optimal control $b^\varepsilon$ such that
\begin{align*}
    \psi_i(\overline{x})=V(i,\overline{x}) &=\sup_{b \in \mathcal{B}(t)} \mathbb{E}_{i,\overline{x}}\left[ V(I_\tau,\tau,S_\tau^b,X_\tau^b) \ind{\{T>\tau\}} + u(X_T^b) \ind{\{T\leq\tau\}}\right]\\
    &\leq  \mathbb{E}_{i,\overline{x}}\left[ V(I_\tau,\tau,S_\tau^{b^{\varepsilon}},X_\tau^{b^{\varepsilon}})\right]+\varepsilon\\
    &\leq  \mathbb{E}_{i,\overline{x}}\left[ \psi_{I_\tau}(\tau,S_\tau^{b^{\varepsilon}},X_\tau^{b^{\varepsilon}})\right] + \varepsilon.
\end{align*}
Then, by Dynkin's formula,
\begin{align*}
    \psi_i(\overline{x}) \leq \psi_i(\overline{x}) + \mathbb{E}_{i,\overline{x}}\left[\int_t^\tau \mathcal{A}^{b^{\varepsilon}} \psi_{i}(v,S_v^{b^{\varepsilon}},X_v^{b^{\varepsilon}}) \diff v  \right] + \varepsilon.
\end{align*}
Therefore, $\mathbb{E}_{i,\overline{x}}\left[\int_t^\tau \mathcal{A}^{b^{\varepsilon}} \psi_{i}(v,S_v^{b^{\varepsilon}},X_v^{b^{\varepsilon}}) \diff v  \right] \geq - \varepsilon$. On the other hand,
\begin{align*}
    \mathbb{E}_{i,\overline{x}}\left[\int_t^\tau \mathcal{A}^{b^{\varepsilon}} \psi_{i}(v,S_v^{b^{\varepsilon}},X_v^{b^{\varepsilon}}) \diff v  \right] = \mathbb{E}_{i,\overline{x}}\left[\ind{\{T_1 \leq t_\delta \wedge t_T\}}\int_t^{T_1}\mathcal{A}^{b^{\varepsilon}} \psi_{i}(v,S_v^{b^{\varepsilon}},X_v^{b^{\varepsilon}}) \diff v  \right] \\
    + \mathbb{E}_{i,\overline{x}}\left[\ind{\{T_1 > t_\delta \wedge t_T\}}\int_t^{t_\delta \wedge t_T}\mathcal{A}^{b^{\varepsilon}} \psi_{i}(v,S_v^{b^{\varepsilon}},X_v^{b^{\varepsilon}}) \diff v  \right] \\
    \leq -\frac{\eta}{2} \left(\mathbb{E}[(T_1-t)\ind{\{T_1 \leq t_\delta \wedge t_T\}}] + e^{-\lambda(t_\delta \wedge t_T -t)}(t_\delta\wedge t_T-t) \right)\\
    = -\frac{\eta}{2 \lambda} (1-e^{-\lambda(t_\delta\wedge t_T-t)}).
\end{align*}
By choosing $\varepsilon<\frac{\eta}{2 \lambda} \left(1-e^{-\lambda(t_\delta\wedge t_T-t)}\right)$, and noting that the right-hand side is independent of $\varepsilon$, we arrive at a contradiction.
\end{proof}
To ensure uniqueness, which is essential for the application of a numerical scheme, we now prove that a comparison principle holds in our setting. The proof combines elements from \cite{AzcueMuler2014} and \cite{HuyenPham2009}, and we use additional techniques due to the specific nature of our problem.
\begin{thm}
    For $j \in \{1,2\}$, let $\tilde{\underline{v}}_j:D_j\to\mathbb{R}$ and $\tilde{\overline{v}}_j:D_j\to\mathbb{R}$ be Lipschitz-continuous viscosity sub- and supersolutions to \eqref{eq:diffequation} respectively. If $\tilde{\underline{v}}_j\leq\tilde{\overline{v}}_j$ on $\partial D_j$, then $\tilde{\underline{v}}_j\leq \tilde{\overline{v}}_j$ on $D_j$. 
\end{thm}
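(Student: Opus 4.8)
The plan is to run a doubling-of-variables argument, reduced to a \emph{scalar} (but nonlocal) comparison principle by exploiting the triangular structure of the system. First I would note that the equation for $v_2$ is self-contained: its nonlocal term involves only $v_2$ (on the current slice and on the slice $s=0$), so one can prove $\tilde{\underline v}_2\le\tilde{\overline v}_2$ on $D_2$ in isolation. Given this, I would use that $\mathcal{A}^b v_1$ is monotone in the ``coupling slot'' $v_2$, which enters only through $\lambda\int_b^\infty v_2(t,0,\cdot)\diff F_Y$ with a nonnegative weight; hence $\tilde{\underline v}_1$ is also a subsolution of the $v_1$-equation in which the coupling slot is frozen to the fixed Lipschitz function $\tilde{\overline v}_2$, while $\tilde{\overline v}_1$ is a supersolution of that same equation. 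Thus it suffices to establish a comparison principle for a single equation $\sup_{b\ge0}\big[(\partial_t+\partial_s)v+(c-\pi)\partial_x v+\lambda\int_0^b v(t,s,x-y)\diff F_Y(y)-\lambda v+k\big]=0$, where $k$ is either a fixed bounded Lipschitz-in-$x$ datum (for $v_1$) or a term depending on the unknown only through its $s=0$ slice (for $v_2$); the argument below covers both.

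Since the $x$-domain is unbounded and the zeroth-order part of $\mathcal{A}^b$ is conservative (constants are in its kernel, as $\lambda F_Y(b)+\lambda(1-F_Y(b))-\lambda=0$), I would introduce one penalty function $\phi(t,x):=e^{A(T-t)}(1+x^2)$ and replace the supersolution by $\tilde{\overline v}^\beta:=\tilde{\overline v}+\beta\phi$. Expanding the nonlocal terms applied to $\phi$, the $(1+x^2)$-contributions cancel, leaving $\mathcal{A}^b(\beta\phi)=\beta e^{A(T-t)}\big[-A(1+x^2)+(\text{at most linear in }|x|)+\lambda\mathbb{E}[Y^2]\big]\le-c_0\beta<0$, uniformly in $(t,s,x)$ and $b\ge0$, once $A$ is large in terms of $\lambda$, $\mathbb{E}[Y^2]$, $\mathbb{E}[Y]$ and $\sup|c-\pi_i|$ (here the finite horizon is essential: the dominant negative term $-A\beta\phi$ comes from $\partial_t$). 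Hence $\tilde{\overline v}^\beta$ is a strict supersolution, still dominates $\tilde{\underline v}$ on $\partial D_j$ (as $\phi\ge0$), and $\tilde{\underline v}-\tilde{\overline v}^\beta\to-\infty$ as $|x|\to\infty$ uniformly in $(t,s)$, since $\tilde{\underline v}$ has at most linear growth and $\phi$ is quadratic. Arguing by contradiction, assume $\sup_{D_j}(\tilde{\underline v}_j-\tilde{\overline v}_j)>0$; then for $\beta$ small $2M:=\sup_{D_j}(\tilde{\underline v}_j-\tilde{\overline v}^\beta_j)>0$ and, by coercivity, this supremum is attained, necessarily at a point that is interior in $(t,s)$ (with $x$ arbitrary) — the hypothesis $\tilde{\underline v}_j\le\tilde{\overline v}_j$ on $\partial D_j$ rules out the terminal face $\{t=T\}$, the active boundary $\{s=\mathcal S\}$, the diagonal $\{s=t\}$ and $\{s=0\}$.

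Then comes the standard doubling: for $\varepsilon>0$ maximize $\Phi_\varepsilon(t,s,x,t',s',x')=\tilde{\underline v}_j(t,s,x)-\tilde{\overline v}^\beta_j(t',s',x')-\tfrac1{2\varepsilon}\big(|t-t'|^2+|s-s'|^2+|x-x'|^2\big)$ over $D_j\times D_j$ (a maximizer exists by the coercivity of $-\tilde{\overline v}^\beta_j$ plus the penalty). The usual penalization lemma gives that the penalty term vanishes, that along a subsequence both arguments converge to a common maximizer of $\tilde{\underline v}_j-\tilde{\overline v}^\beta_j$ lying in the PDE-interior, and — using only Lipschitz continuity of $\tilde{\underline v}_j$ — that $\tfrac1\varepsilon|x_\varepsilon-x'_\varepsilon|$ stays bounded. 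For small $\varepsilon$ I apply the viscosity subsolution inequality to $\tilde{\underline v}_j$ at $(t_\varepsilon,s_\varepsilon,x_\varepsilon)$ and the strict supersolution inequality to $\tilde{\overline v}^\beta_j$ at $(t'_\varepsilon,s'_\varepsilon,x'_\varepsilon)$, with the quadratic penalty as the common test function, and subtract. The $\partial_t,\partial_s$ gradient contributions match and cancel; the $\partial_x$ contribution is $(\pi_i(s'_\varepsilon)-\pi_i(s_\varepsilon))\tfrac1\varepsilon(x_\varepsilon-x'_\varepsilon)\to0$ by continuity of $\pi_i$; the zeroth-order terms give $-\lambda\Phi_\varepsilon^0$, where $\Phi_\varepsilon^0:=\tilde{\underline v}_j(t_\varepsilon,s_\varepsilon,x_\varepsilon)-\tilde{\overline v}^\beta_j(t'_\varepsilon,s'_\varepsilon,x'_\varepsilon)\to2M$; and each nonlocal average is bounded above by $\Phi_\varepsilon^0$ times the corresponding (sub-)probability mass, because $\Phi_\varepsilon$ is a global maximum, shifting both $x$-arguments by the same $y$ (resp. $r(y,m_i)$) leaves the penalty unchanged, and the $s=0$ evaluation in the jump term only drops the nonnegative $s$-penalty. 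Using $\sup_b(\cdot)-\sup_b(\cdot)\le\sup_b\big((\cdot)-(\cdot)\big)$, the $\lambda\Phi_\varepsilon^0$ contributions cancel against the zeroth-order term (in the $v_1$-case the jump term is a fixed datum contributing only $o(1)$, and $\lambda\Phi_\varepsilon^0(F_Y(b^*)-1)\le0$, which is harmless), leaving $c_0\beta\le o(1)$ as $\varepsilon\to0$ — a contradiction for fixed $\beta>0$. Hence $\tilde{\underline v}_j\le\tilde{\overline v}_j$ on $D_j$, first for $j=2$, then, with $\tilde{\overline v}_2$ frozen into the coupling term, for $j=1$.

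The one genuinely non-routine ingredient is the choice of $\phi$: it must simultaneously restore coercivity in the unbounded variable $x$ and turn the supersolution into a \emph{strict} one, and it is exactly the (sub-)conservativity of the nonlocal operator that lets a single $\phi$ achieve both. The second mildly delicate point is the interplay of $\sup_{b\ge0}$ with the doubling — one argues with the supremum of the difference of the two Hamiltonians and must check that the nonlocal terms, evaluated at shifted and at $s=0$ arguments, remain controlled by $\Phi_\varepsilon^0$ despite the weight $F_Y(b^*)$ depending on the optimizing $b^*$. Everything else (existence of maximizers, the penalization lemma, matching of gradients, boundedness of $\tfrac1\varepsilon(x_\varepsilon-x'_\varepsilon)$, and the triangular/monotone decoupling of the system) is standard, following \cite{AzcueMuler2014} and \cite{HuyenPham2009}.
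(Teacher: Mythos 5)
Your proposal is correct, and it takes a genuinely different route from the paper's proof in two respects. First, you decouple the system by exploiting its triangular structure: the $v_2$-equation is self-contained (its nonlocal coupling slot involves only $v_2$, on the current slice and on $\{s=0\}$), so you establish $v_2$-comparison first, then freeze $\tilde{\overline{v}}_2$ into the monotone coupling slot of the $v_1$-equation and run a \emph{scalar} comparison. The paper instead treats the two components simultaneously, setting $M = \max_{j\in\{1,2\}}\sup_{A_j}(\underline{v}_j - \overline{v}_j^\varepsilon)$ and doubling variables for the maximizing index; the coupled nonlocal term is then bounded by the same $M$, which closes the loop. Your decoupling is arguably cleaner and avoids the bookkeeping of a vector-valued maximum. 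Second, you produce strictness and coercivity with a single penalty $\beta\phi$, $\phi = e^{A(T-t)}(1+x^2)$, choosing $A$ large so that $\mathcal{A}^b(\beta\phi) \leq -c_0\beta$ uniformly --- the finite horizon and the conservativity of the zeroth-order part are precisely what make one $\phi$ do both jobs --- yielding the contradiction $c_0\beta \leq o(1)$. The paper separates the two roles: it rescales $\underline{v}_j \mapsto e^{\delta t}\underline{v}_j$, which turns the zeroth-order coefficient from $-\lambda$ into $-(\lambda+\delta)$, and then adds $\varepsilon\phi$ with $\phi = e^{-\gamma t}(1+x^2)$ chosen only so that $\overline{v}_j^\varepsilon$ stays a (non-strict) supersolution; the contradiction then appears as $M \leq \tfrac{\lambda}{\lambda+\delta}M$. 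The remaining ingredients --- excluding boundary maximizers via the hypothesis $\tilde{\underline{v}}_j\le\tilde{\overline{v}}_j$ on $\partial D_j$, the quadratic doubling penalty, Lipschitz control of $\tfrac1\varepsilon|x_\varepsilon-x'_\varepsilon|$, bounding the nonlocal averages by the penalized supremum because shifting both $x$-arguments leaves the penalty invariant and the $s=0$ evaluation only drops a nonnegative term, and the observation that $\lambda(F_Y(b^*)-1)\Phi_\varepsilon^0\le 0$ --- match the paper in spirit.
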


\begin{proof}
    For $j \in \{1,2\}$, let $\tilde{\underline{v}}_j$ and $\tilde{\overline{v}}_j$ be a viscosity sub- and supersolution respectively. We argue by contradiction and assume that there is a $k \in \{1,2\}$ and a point $(t_0,x_0,s_0) \in D_k$ such that
    \begin{equation*}
        \tilde{\underline{v}}_k(t_0,s_0,x_0)-\tilde{\overline{v}}_k(t_0,s_0,x_0)>0.
    \end{equation*}
    For $j \in \{1,2\}$, we define $\underline{v}_j(t,s,x)=e^{\delta t}\tilde{\underline{v}}_j(t,s,x)$ and $\overline{v}_j(t,s,x)=e^{\delta t}\tilde{\overline{v}}_j(t,s,x)$, with $\delta>0$. Then, these functions are viscosity sub- and supersolution respectively to the equation
    \begin{align}
    \label{proof:differentequation}
          \sup_{b\geq 0}&\left(\frac{\partial}{\partial t}+\frac{\partial}{\partial s}\right) v_j(t,s,x)+\left(c-\pi_j(s,m_j)\right)\frac{\partial}{\partial x}v_j(t,s,x) \nonumber + \lambda \int_{0}^{b}v_j(t,s,x-y)\diff F_Y(y) \nonumber \\
    &+ \lambda \int_{b}^{\infty}v_2(t,s,x-r(y,m_j))\diff F_Y(y) - (\lambda + \delta) v_j(t,s,x)=0. 
    \end{align}

    Let $\phi(t,x)=e^{-\gamma t}(1+x^2)$ and define $\overline{v}_j^\varepsilon(t,s,x)=\overline{v}_j(t,s,x)+\varepsilon  \phi(t,x)$. We show that there is a $\gamma>0$ such that  $\overline{v}_j^\varepsilon$ is again a supersolution to \eqref{proof:differentequation}. Therefore, for some $b\geq 0$, we consider
    \begin{align*}
       &\left(\frac{\partial}{\partial t}+\frac{\partial}{\partial s}\right) \phi(t,x)+\left(c-\pi_j(s,m_j)\right)\frac{\partial}{\partial x}\phi(t,x) \nonumber + \lambda \int_{0}^{b}\phi(t,x-y)\diff F_Y(y) \nonumber \\
    &+ \lambda \int_{b}^{\infty}\phi(t,x-r(y,m_j))\diff F_Y(y) - (\lambda+\delta) \phi(t,x)\\
    & =  e^{-\gamma t} \Big(-\gamma(1+x^2)+(c-\pi_i(s,m_j))2x + \lambda  \int_{0}^{b}(1+(x-y))^2\diff F_Y(y) \nonumber \\
    &+ \lambda \int_{b}^{\infty}(1+(x-r(y,m_j))^2)\diff F_Y(y) - (\lambda + \delta)(1+x^2)\Big)\\
    &\leq e^{-\gamma t}\Big((\lambda-\delta-\gamma)x^2 + 2(c-\pi_j(s,m_j)-\lambda\mathbb{E}[Y+r(Y,m_j)])x\\&\qquad+\lambda\mathbb{E}[Y^2+r(Y,m_j)^2]+(\lambda-\delta-\gamma) \Big).
    \end{align*}
    \\
    The expression is less than zero if the term inside the brackets is negative. Since this term is a polynomial in $x$, we can analyze its sign by studying its coefficients. If the leading coefficient is negative and the discriminant is non-positive, then the entire polynomial is non-positive for all $x$. Therefore, we need at least that $\gamma>\lambda-\delta$ and define

\begin{align*}
    &b_j(s)=2(c-\pi_j(s,m_j)-\lambda\mathbb{E}[Y+r(Y,m_j)]),\\
    &\tilde{c}_j = \lambda\mathbb{E}[Y^2+r(Y,m_j)^2].
\end{align*}
Then, the discriminant is given by
\begin{align*}
    b_j^2(s)-4(\lambda-\delta - \gamma)(\tilde{c}_j+(\lambda-\delta - \gamma)).
\end{align*}
and is less than 0 if we set 
\begin{align*}
    \gamma\geq  \max_{j \in \{1,2\}}\sup_{s \in [0,T]}\lambda-\delta + \frac{1}{2}\left(\tilde{c}_j+\sqrt{b_j(s)^2+\tilde{c}_j^2}\right).
\end{align*}
With this choice of $\gamma$ and since this holds for all $b\geq0$, we get that $\overline{v}_j^\varepsilon$ is again a supersolution to the corresponding equation. \\ \ \\
    Since $\underline{v}_j$ and $\overline{v}_j$ are Lipschitz continuous, there is a constant $L>0$ such that 
    \begin{align}
    \label{proof:growthcondition}
        \sup_{D_j}\frac{|\underline{v}_j(t,s,x)|+|\overline{v}_j(t,s,x)|}{1+|x|}<L.
    \end{align}
    Since 
    \begin{equation*}
        \underline{v}_j(t,s,x)-\overline{v}^\varepsilon_j(t,s,x)<L(1+|x|)-\varepsilon e^{-\gamma T} (1+x^2),
    \end{equation*}
    we can find a constant $a>0$ such that this is less than zero for $x>a$ and $x<-a$. Therefore, we can find a compact set $A_j \subseteq D_j$ such that
    \begin{equation*}
        M = \max_{j \in \{1,2\}} \sup_{A_j}(\underline{v}_j-\overline{v}_j^\varepsilon)= \max_{j \in \{1,2\}}\sup_{D_j}(\underline{v}_j-\overline{v}_j^\varepsilon)> \underline{v}_k(t_0,s_0,x_0)-\overline{v}_k^\varepsilon(t_0,s_0,x_0)>0,
    \end{equation*}
    for some small enough $\varepsilon>0$. We call the corresponding maximizing index $i$ and the maximizer $w^*=(t^*,s^*,x^*)$. One can show, that $\underline{v}_i$ and $\overline{v}_i^\varepsilon$ are still Lipschitz continuous on $A_i$ with constant $m$. \\
    We define,
    \begin{align*}
        g_\nu^i(t_1,s_1,x_1,t_2,s_2,x_2):=&\frac{\nu}{2}\left[(t_1-t_2)^2+(s_1-s_2)^2+(x_1-x_2)^2\right],
    \end{align*}
    for some $\nu>0$ and further
    \begin{equation*}
        G_\nu ^i (t_1,s_1,x_1,t_2,s_2,x_2) := \underline{v}_i(t_1,s_1,x_1)-\overline{v}^\varepsilon_i(t_2,s_2,x_2)- g_\nu^i(t_1,s_1,x_1,t_2,s_2,x_2).
    \end{equation*}
    Let $M_\nu ^i=\max_{A_i \times A_i} G_\nu^i$ and $w_\nu=(t_1^\nu,s_1^\nu,x_1^\nu,t_2^\nu,s_2^\nu,x_2^\nu)$ the corresponding maximizer. Then,
    \begin{equation*}
        M_\nu ^i \geq G_\nu^i(t^*,s^*,x^*,t^*,s^*,x^*)=M.
    \end{equation*}
    We need that $(t_1^\nu,s_1^\nu,x_1^\nu,t_2^\nu,s_2^\nu,x_2^\nu) \notin \partial (A_i \times A_i)$. 
    The boundaries are of the following form: 
   \begin{align*}
\partial (A_1 \times A_1) =  \big\{\, &(t_1, t_2, s_1, s_2, x_1, x_2) \in A_1 \ \big| \\
& t_j \in \{0,T\} \ \text{or} \ s_j \in \{0,T\} \ \text{or} \ s_j = t_j \ \text{or} \ x_j \in \{-a,a\} \text{ for } j \in \{1,2\}\big\}.
\end{align*}
\begin{align*}
\partial (A_2 \times A_2) =  \big\{\, &(t_1, t_2, s_1, s_2, x_1, x_2) \in A_2 \ \big| \\
& t_j \in \{0,T\} \ \text{or} \ s_j \in \{0,\mathcal{S}\} \ \text{or} \ s_j = t_j \ \text{or} \ x_j \in \{-a,a\} \text{ for } j \in \{1,2\}\big\}.
\end{align*}
    By \eqref{proof:growthcondition} and the construction of $A_i$, we know that
    \begin{equation*}
         G_\nu ^i (t_1,s_1,x_1,t_2,s_2,x_2) \leq 2L (1+a)- \frac{\nu}{2}\left[(t_1-t_2)^2+(s_1-s_2)^2+(x_1-x_2)^2\right].
    \end{equation*}
    If either $|t_1-t_2|>\vartheta$, $|s_1-s_2|>\vartheta$ or $|x_1-x_2|>\vartheta$ for some $\vartheta>0$, then this is negative for $\nu > \frac{4L(1+a)}{\vartheta}$. \\So let us consider the case where $t_1=t_2$, $s_1=s_2$ and $x_1=x_2$.
    Then,
    \begin{align*}
             G_\nu ^i (t_1,s_1,x_1,t_1,s_1,x_1) =  \underline{v}_i(t_1,s_1,x_1)-\overline{v}^\varepsilon_i(t_1,s_1,x_1).
    \end{align*}
    In the cases where $t_1=T$,  $s_1=\mathcal{S}$ (if $i=2$), $t_1=0$, $s_1=0$ or $t_1=s_1$, this is less than or equal to zero by the theorem's assumption. \\If $x_1 \in \{-a,a\}$, this is less than or equal to zero by construction of $A_i$. Therefore, the maximum is not attained on the boundary. \\ \ \\
    \noindent
    Since $G_\nu^i$ attains a maximum in $w_\nu$, it holds that for any $(t_1,s_1,x_1) \in D_i$, 
    \begin{equation*}
        G_\nu^i(t_1,s_1,x_1,t_2^\nu,s_2^\nu,x_2^\nu)\leq G_\nu^i(w_\nu),
    \end{equation*}
    and therefore
    \begin{equation*}
        \underline{v}_i(t_1,s_1,x_1)-\underline{v}_i(t_1^\nu,s_1^\nu,x_1^\nu)\leq g_\nu^i(w_\nu)-g_\nu^i (t_1,s_1,x_1,t_2^\nu,s_2^\nu,x_2^\nu).
    \end{equation*}
    Together with Taylor's formula we arrive at
    \begin{align*}&\limsup_{(t_1,s_1,x_1)\rightarrow (t_1^\nu,s_1^\nu,x_1^\nu)}\biggl(\frac{ \underline{v}_i(t_1,s_1,x_1)-\underline{v}_i(t_1^\nu,s_1^\nu,x_1^\nu)}{|t_1^\nu-t_1|+|s_1^\nu-s_1|+|x_1^\nu-x_1|}\\
        &-\frac{ (t_1^\nu-t_1)\frac{\partial}{\partial t_1}g_\nu(w_\nu)+(s_1^\nu-s_1)\frac{\partial}{\partial s_1}g_\nu(w_\nu)+(x_1^\nu-x_1)\frac{\partial}{\partial x_1}g_\nu(w_\nu)}{|t_1^\nu-t_1|+|s_1^\nu-s_1|+|x_1^\nu-x_1|}\biggr)\leq 0
    \end{align*}
    and therefore $\partial_{t_1+s_1+x_1} g_\nu^i(w_\nu) \in D^+ (\underline{v}_i)(t_1^\nu,s_1^\nu,x_1^\nu)$, which is the set of all superdifferentials of $\underline{v}_i$ at $(t_1^\nu,s_1^\nu,x_1^\nu)$.
    With a similar argument,
     \begin{align*}
        &\liminf_{(t_2,s_2,x_2)\rightarrow (t_2^\nu,s_2^\nu,x_2^\nu)} \biggl(\frac{ \overline{v}^\varepsilon_i(t_2,s_2,x_2)-\overline{v}^\varepsilon_i(t_2^\nu,s_2^\nu,x_2^\nu)}{|t_2^\nu-t_2|+|s_2^\nu-s_2|+|x_2^\nu-x_2|}\\
        &-\frac{(t_2^\nu-t_2)(-\frac{\partial}{\partial t_2}g_\nu(w_\nu))+(s_2^\nu-s_2)(-\frac{\partial}{\partial s_2}g_\nu(w_\nu))+(x_2^\nu-x_2)(-\frac{\partial}{\partial x_2}g_\nu(w_\nu))}{|t_2^\nu-t_2|+|s_2^\nu-s_2|+|x_2^\nu-x_2|}\biggr)\geq 0.
    \end{align*}
    Therefore, $-\partial_{t_2+s_2+x_2} g_\nu^i(w_\nu) \in D^- (\overline{v}^\varepsilon_i)(t_2^\nu,s_2^\nu,x_2^\nu)$, which is the set of all subdifferentials of $\overline{v}^\varepsilon_i$ in $(t_2^\nu,s_2^\nu,x_2^\nu)$. In addition, 
    \begin{equation}
    \label{eq:g_x=-g_y}
        \partial_{t_1+s_1+x_1} g_\nu^i(w_\nu)=-\partial_{t_2+s_2+x_2} g_\nu^i(w_\nu).
    \end{equation}
    
    \noindent
    Now we return to the equation \eqref{proof:differentequation}. There is a $b\geq 0$ such that
    \begin{align}
    \label{eq:proof>=0}
        &\left(\frac{\partial}{\partial t_1}+\frac{\partial}{\partial s_1}\right) g_{\nu}^i(t_1,s_1,x_1,t_2,s_2,x_2)|_{w_{\nu}} \nonumber \\&+\left(c-\pi_i(s_1^\nu,m_i)\right)\frac{\partial}{\partial x_1}g_{\nu}^i(t_1,s_1,x_1,t_2,s_2,x_2)|_{w_{\nu}} \nonumber \\
    &+ \lambda \left(\int_{0}^{b}\underline{v}_i(t_1^\nu,s_1^\nu,x_1^\nu-y)\diff F_Y(y) \nonumber 
    +\int_{b}^{\infty}\underline{v}_2(t_1^\nu,0,x_1^\nu-r(y,m_i))\diff F_Y(y)\right) \\&- (\lambda+\delta) \underline{v}_i(t_1^\nu,s_1^\nu,x_1^\nu) \geq 0,
    \end{align}
    and 
    \begin{align}
    \label{eq:proof<=0}
        &-\left(\frac{\partial}{\partial t_2}+\frac{\partial}{\partial s_2}\right) g_{\nu}^i(t_1,s_1,x_1,t_2,s_2,x_2)|_{w_{\nu}}\\
        &-\left(c-\pi_i(s_2^\nu,m_i)\right)\frac{\partial}{\partial x_2}g_{\nu}^i(t_1,s_1,x_1,t_2,s_2,x_2)|_{w_{\nu}} \nonumber \\
    &+ \lambda \left(\int_{0}^{b}\overline{v}^\varepsilon_i(t_2^\nu,s_2^\nu,x_2^\nu-y)\diff F_Y(y) \nonumber 
    +\int_{b}^{\infty}\overline{v}^\varepsilon_2(t_2^\nu,0,x_2^\nu-r(y,m_i))\diff F_Y(y)\right) \\&- (\lambda+\delta) \overline{v}^\varepsilon_i(t_2^\nu,s_2^\nu,x_2^\nu) \leq 0,
    \end{align}
    By subtracting the left hand side of \eqref{eq:proof>=0} from the LHS of \eqref{eq:proof<=0} together with \eqref{eq:g_x=-g_y}, we arrive at
  
    \begin{align}
    \label{proof:(delta+lambda)}
         &(\lambda + \delta)(\underline{v}_i(t_1^\nu,s_1^\nu,x_1^\nu)-\overline{v}^\varepsilon_i(t_2^\nu,s_2^\nu,x_2^\nu)) \nonumber\\
        &\leq \left(c-\pi_i(s_2^\nu,m_i)\right)\frac{\partial}{\partial x_2}g_{\nu}^i(t_1,s_1,x_1,t_2,s_2,x_2)|_{w_{\nu}} \nonumber \\&+ \left(c-\pi_i(s_1^\nu,m_i)\right)\frac{\partial}{\partial x_1}g_{\nu}^i(t_1,s_1,x_1,t_2,s_2,x_2)|_{w_{\nu}}\nonumber\\
        &+\lambda \left(\int_{0}^{b}\underline{v}_i(t_1^\nu,s_1^\nu,x_1^\nu-y)\diff F_Y(y)  
   +\int_{b}^{\infty}\underline{v}_2(t_1^\nu,0,x_1^\nu-r(y,m_i))\diff F_Y(y)\right) \nonumber\\
   &-  \lambda \left(\int_{0}^{b}\overline{v}^\varepsilon_i(t_2^\nu,s_2^\nu,x_2^\nu-y)\diff F_Y(y)  
    +\int_{b}^{\infty}\overline{v}^\varepsilon_2(t_2^\nu,0,x_2^\nu-r(y,m_i))\diff F_Y(y)\right)
    \end{align}
    Since $w_\nu$ is the maximum of $G_\nu$ on $A_i \times A_i$,  we have that 
    \begin{align*}
        G_\nu ^i (t_1^\nu,s_1^\nu,x_1^\nu,t_1^\nu,s_1^\nu,x_1^\nu)+G_\nu ^i (t_2^\nu,s_2^\nu,x_2^\nu,t_2^\nu,s_2^\nu,x_2^\nu) \leq 2G_\nu(w_\nu).
    \end{align*}
    This is equivalent to
    \begin{align*}
        &\underline{v}_i(t_1^\nu,s_1^\nu,x_1^\nu)-\overline{v}_i^\varepsilon(t_1^\nu,s_1^\nu,x_1^\nu)+ \underline{v}_i(t_2^\nu,s_2^\nu,x_2^\nu)-\overline{v}_i^\varepsilon(t_2^\nu,s_2^\nu,x_2^\nu) \\
        &\leq 2(\underline{v}_i(t_1^\nu,s_1^\nu,x_1^\nu)-\overline{v}_i^\varepsilon(t_2^\nu,s_2^\nu,x_2^\nu)) - \nu ||(t_1^\nu,s_1^\nu,x_1^\nu)-(t_2^\nu,s_2^\nu,x_2^\nu)||^2_2
    \end{align*}
    and further 
    \begin{align*}
       &\nu||(t_1^\nu,s_1^\nu,x_1^\nu)-(t_2^\nu,s_2^\nu,x_2^\nu)||^2_2 \\
       &\leq \underline{v}_i(t_1^\nu,s_1^\nu,x_1^\nu)- \underline{v}_i(t_2^\nu,s_2^\nu,x_2^\nu)+ \overline{v}_i^\varepsilon(t_1^\nu,s_1^\nu,x_1^\nu)-\overline{v}_i^\varepsilon(t_2^\nu,s_2^\nu,x_2^\nu)\\
       & \leq m ||(t_1^\nu,s_1^\nu,x_1^\nu)-(t_2^\nu,s_2^\nu,x_2^\nu)||_1
       \leq m \sqrt{2} ||(t_1^\nu,s_1^\nu,x_1^\nu)-(t_2^\nu,s_2^\nu,x_2^\nu)||_2.
    \end{align*}
    Thus,
    \begin{equation}
    \label{proof:inequality}
        ||(t_1^\nu,s_1^\nu,x_1^\nu)-(t_2^\nu,s_2^\nu,x_2^\nu)||_2 \leq \frac{\sqrt{2}m}{\nu}.
    \end{equation}
    So if we take a sequence $(\nu_n)_{n \in \mathbb{N}}$ such that $w_\nu$ converges to $(\overline{t}_1,\overline{s}_1,\overline{x}_1,\overline{t}_2,\overline{s}_2,\overline{x}_2)$ as $\nu_n \rightarrow \infty$, then we get by \eqref{proof:inequality} that $(\overline{t}_1,\overline{s}_1,\overline{x}_1)=(\overline{t}_2,\overline{s}_2,\overline{x}_2)$.

By \eqref{proof:(delta+lambda)} and \eqref{eq:g_x=-g_y}, we arrive at
   \begin{align*}
         &(\lambda + \delta)(\underline{v}_i(\overline{t}_1,\overline{s}_1,\overline{x}_1)-\overline{v}^\varepsilon_i(\overline{t}_1,\overline{s}_1,\overline{x}_1)) \nonumber\\
        &\leq \lambda \left(\int_{0}^{b}\underline{v}_i(\overline{t}_1,\overline{s}_1,\overline{x}_1-y)\diff F_Y(y)  
   +\int_{b}^{\infty}\underline{v}_2(\overline{t}_1,0,\overline{x}_1-r(y,m_i))\diff F_Y(y)\right) \nonumber\\
   &-  \lambda \left(\int_{0}^{b}\overline{v}^\varepsilon_i(\overline{t}_1,\overline{s}_1,\overline{x}_1-y)\diff F_Y(y)  
    +\int_{b}^{\infty}\overline{v}^\varepsilon_2(\overline{t}_1,0,\overline{x}_1-r(y,m_i))\diff F_Y(y)\right) \leq \lambda M.
    \end{align*}
    So 
    \begin{equation*}
       M \leq \lim_{n \to \infty}M_{\nu_n} =\underline{v}_i(\overline{t}_1,\overline{s}_1,\overline{x}_1)-\overline{v}^\varepsilon_i(\overline{t}_1,\overline{s}_1,\overline{x}_1)\leq \frac{\lambda}{\lambda + \delta} M,
    \end{equation*}
    which is a contradiction and thus completes the proof.

\end{proof}

We have now established that $V$ is the unique viscosity solution to \eqref{eq:diffequation} which allows us to proceed with a numerical approach.

\section{Numerical example}
\label{sec:Numerics}
Based on the results from the previous section, we can construct the value function by finding a solution to the system of Hamilton–Jacobi–Bellman equations. We will carry out this process numerically, following an approach inspired by the the paper \textit{Permanent health insurance} by \cite{DavisHealthInsurance}. In contrast to their PDMP-PDE system, our setting involves a PIDE system, i.e., a PDE with additional integral terms.

The following derivations also work for Markovian controls of the form 
\begin{equation*}
   b(t)=\tilde{b}(I_t,t,S_t^b,X_t^b),
\end{equation*} but for simplicity of notation, we assume for the moment that $b \geq 0$ is constant. Consider the functional
\begin{equation*}
    J(i,t,s,x,b)=\mathbb{E}_{i,t,s,x}[h(X^b_T)].
\end{equation*}
The idea is the following: Let $\{\sigma_n\}_{n \in \mathbb{N}}$ be the sequence of all jump times, i.e., a combination of the jump times $\{T_j\}_{j \in \mathbb{N}}$ and the transition times from state 2 to state 1. Further, let $\sigma_0=0$ and $\tilde{N}_t$ be the number of jumps in $(0,t]$. Additionally, we say that $\{\tilde{Y}_j\}_{j \in \mathbb{N}}$ are the corresponding jump heights, with $\tilde{Y}_j=0$ for all jumps which are not triggered by the Poisson process. For $n \in \mathbb{N}_0$, we define
\begin{equation*}
    v_i^n(t,s,x)=\mathbb{E}_{i,t,s,x}[h(X^b_T)\ind{\{\tilde{N}_T-\tilde{N}_t\leq n\}}]=\sum_{k=0}^n \mathbb{E}_{i,t,s,x}[h(X^b_T)\ind{\{\tilde{N}_T-\tilde{N}_t=k\}}].
\end{equation*}

Then, it holds that,
\begin{align*}
    v_1^{k+1}(t,s,x)=&\mathbb{E}_{1,t,s,x} \biggl[h(X_T)\mathbbm{1}_{\{T<\sigma_{\tilde{N}_t+1}\}} \\&+\biggl(v_1^k(\sigma_{\tilde{N}_t+1},s+\sigma_{\tilde{N}_t+1}-t,X_{\sigma_{\tilde{N}_t+1}})\ind{\{\tilde{Y}_{\tilde{N}_t+1}<b\}}  \\&+v_2^k(\sigma_{\tilde{N}_t+1},0,X_{\sigma_{\tilde{N}_t+1}})  \ind{\{\tilde{Y}_{\tilde{N}_t+1}\geq b\}}  \biggr) \mathbbm{1}_{\{T\geq \sigma_{\tilde{N}_t+1}\}} \biggr],
\end{align*}
and
\begin{align*}
    v_2^{k+1}(t,s,x)=&\mathbb{E}_{2,t,s,x} \biggl[h(X_T)\mathbbm{1}_{\{T<\sigma_{\tilde{N}_t+1}\}} \\&+v_1^k(\sigma_{\tilde{N}_t+1},0,X_{\sigma_{\tilde{N}_t+1}}) \ind{\{ T\geq \sigma_{\tilde{N}_t+1}>t+\mathcal{S}-s \}}\\
    &+ \biggl(v_2^k(\sigma_{\tilde{N}_t+1},s+\sigma_{\tilde{N}_t+1}-t,X_{\sigma_{\tilde{N}_t+1}})\ind{\{\tilde{Y}_{\tilde{N}_t+1}<b\}} \\&+v_2^k(\sigma_{\tilde{N}_t+1},0,X_{\sigma_{\tilde{N}_t+1}})  \ind{\{\tilde{Y}_{\tilde{N}_t+1}\geq b\}}  \biggr) \mathbbm{1}_{\{\sigma_{\tilde{N}_t+1} \leq T\wedge t+\mathcal{S}-s \}} \biggr].
\end{align*}
Moreover,
\begin{align*}
    &v_1^0(t,s,x)=h\left(x+\int_{s}^{s+(T-t)}c-\pi_1(s,m_1)\diff s\right)e^{-\lambda(T-t)},\\
     &v_2^0(t,s,x)=h\left(x+\int_{s}^{s+(T-t)}c-\pi_2(s,m_2)\diff s\right) e^{-\lambda(T-t)} \ind{\{s+T-t< \mathcal{S}\}}.
\end{align*}

It holds that
\begin{equation*}
    \lim _ {n \to \infty} v_i^n(t,x) =J(i,t,s,x,b),
\end{equation*} since $\mathbb{E}_{i,t,s,x}[|h(X_T^b)|]<\infty$. 

\newpage
For a fixed $b\geq 0$, the functions $v_1^n(t,s,x)$ and $v_2^n(t,s,x)$ (provided differentiability) solve the following system of differential equations:
\begin{align*}
      &\left(\frac{\partial}{\partial t}+\frac{\partial}{\partial s}\right) v_1^{n+1}(t,s,x)+\left(c-\pi_1(s,m_1)\right)\frac{\partial}{\partial x}v_1^{n+1}(t,s,x) \nonumber \\
    &+ \lambda \int_{0}^{b}v_1^n(t,s,x-y)\diff F_Y(y) \nonumber \\
    &+ \lambda \int_{b}^{\infty}v_2^n(t,0,x-r(y,m_1))\diff F_Y(y) - \lambda v_1^{n+1}(t,s,x)=0,\\ \ \\
    &\left(\frac{\partial}{\partial t}+\frac{\partial}{\partial s}\right) v_2^{n+1}(t,s,x)+\left(c-\pi_2(s,m_2)\right)\frac{\partial}{\partial x}v_2^{n+1}(t,s,x) \nonumber \\
     &+ \lambda \int_{0}^{b}v_2^{n}(t,s,x-y)\diff F_Y(y)  \nonumber \\
     &+ \lambda \int_{b}^{\infty}v_2^n(t,0,x-r(y,m_2))\diff F_Y(y)- \lambda v_2^{n+1}(t,s,x)=0,
\end{align*}
with boundary conditions 
\begin{align*}
    &v_i^n(T,s,x)=h(x),\\
        &v_2^{n+1}(t,\mathcal{S},x)=v_1^n(t,0,x).
\end{align*}
For a similar iterative approach, again in the context of permanent health insurance, see \citep{Rolski}.
Together with a policy iteration algorithm (see for example \citep{KushnerDupuis}), we can compute an approximation of 
\begin{equation*}
    V(i,t,s,x)=\sup_{b \in \mathcal{B}(t)}J(i,t,s,x,b).
\end{equation*}

We now examine a concrete example. Let $h(x)=\max\{-10^{10},-e^{-\gamma x}\}$, where $\gamma>0$. This mimics an exponential utility function. Further, let the claim sizes be exponentially distributed with mean $\mu>0$. We specify the model parameters as in Table \ref{tab:Model_specifications}.

\begin{table}[h]
\centering
\caption{Model specifications}
\label{tab:Model_specifications}
\setlength{\tabcolsep}{10pt}
\begin{tabular}{c c c c c c c c c c c}
\toprule
$T$ & $\mathcal{S}$ & $\lambda$ & $\mu$ & $m_1$ & $m_2$ & $\pi_1(s,m_1)$ & $\pi_2(s,m_2)$ & $c$ & $\gamma$ \\
\midrule
5 & 2 & 1 & 1 & 0 & 0 & $-\frac{7s}{10 T} + 1$ & 1.1 & 1.2 & 0.5 \\
\bottomrule
\end{tabular}
\end{table}
\ \\ 
Since technically, $x \in \mathbb{R}$, we only solve the equation on $[0,5]$. We denote the approximated value function by $\tilde{V}$. After 5 iterations, we arrive at Figures \ref{fig:approxValueFunctionsinx}, \ref{fig:approxValueFunctionsint} and \ref{fig:approxValueFunctionsins}. 

\begin{figure}[h]

\includegraphics[scale=1]{./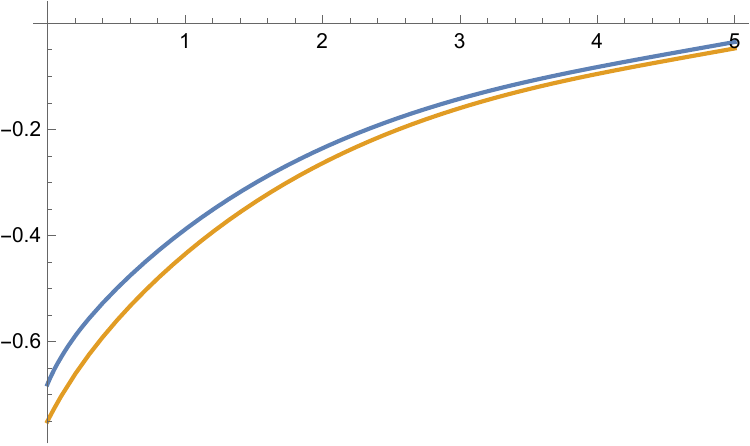}
        \caption{Approximated value functions $x \mapsto \tilde{V}(1,0,0,x)$ (blue) and $x \mapsto \tilde{V}(2,0,0,x)$ (orange). }
        \label{fig:approxValueFunctionsinx}
\end{figure}

\begin{figure}[H]
\includegraphics[scale=1]{./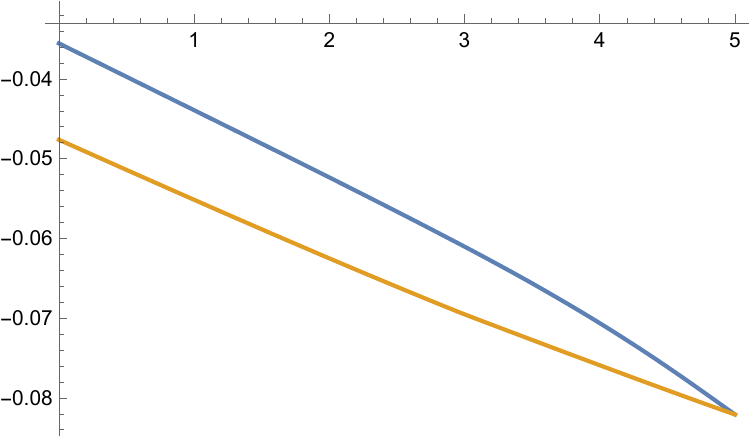}
        \caption{Approximated value functions $t \mapsto \tilde{V}(1,t,0,5)$ (blue) and $t \mapsto \tilde{V}(2,t,0,5)$ (orange). }
        \label{fig:approxValueFunctionsint}
\end{figure}

\begin{figure}[h]
\includegraphics[scale=1]{./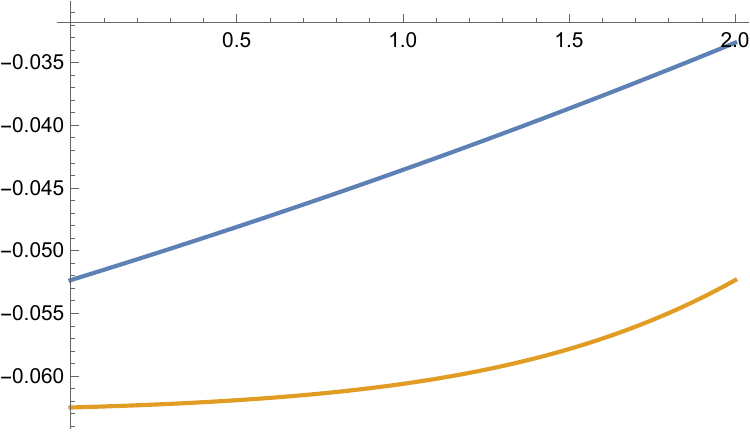}
        \caption{Approximated value functions $s \mapsto \tilde{V}(1,\mathcal{S},s,5)$ (blue) and $s \mapsto \tilde{V}(2,\mathcal{S},s,5)$ (orange). }
        \label{fig:approxValueFunctionsins}
\end{figure}

The corresponding approximate optimal barriers are functions $\tilde{b}(1,t,s,x)$ and $\tilde{b}(2,t,s,x)$. In Figure \ref{fig:approxbarrier1}, we see that in state 1, the barrier strategy $\tilde{b}$ decreases when $t$ is close to maturity $T$, for fixed values of $s$ and $x$. The closer $t$ is to maturity, the more likely a claim is to be reported. This suggests, that the disadvantage of switching to class 2, namely the higher premium rate, becomes less relevant. Very small claims remain unreported, even near to maturity. In Figure \ref{fig:approxbarrier2}, we see that something similar holds in state~2, if the time since reporting the last claim is nearly $\mathcal{S}$. If $t=s=1.6$, then it is possible to reach $\mathcal{S}$ and therefore get transferred to the the better premium class $C_1$ before maturity, which is why only claims above $0.15$ get reported. As $t$ approaches maturity, however, more claims are reported, since upgrading to a better premium class either becomes unprofitable relative to the claim sizes or cannot be achieved in the remaining time.

\begin{figure}[ht]
    \centering
    \begin{subfigure}{0.48\textwidth}
        \includegraphics[width=\linewidth]{./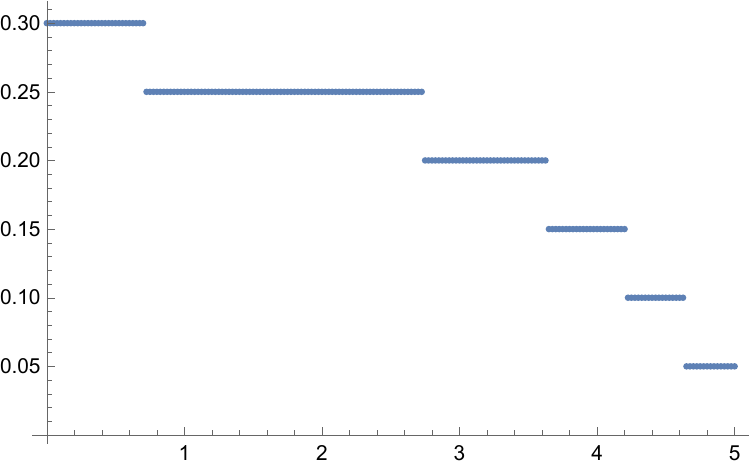}
        \caption{$t \mapsto \tilde{b}(1,t,0,5)$.}
        \label{fig:approxbarrier1}
    \end{subfigure}
    \hfill
    \begin{subfigure}{0.48\textwidth}
        \includegraphics[width=\linewidth]{./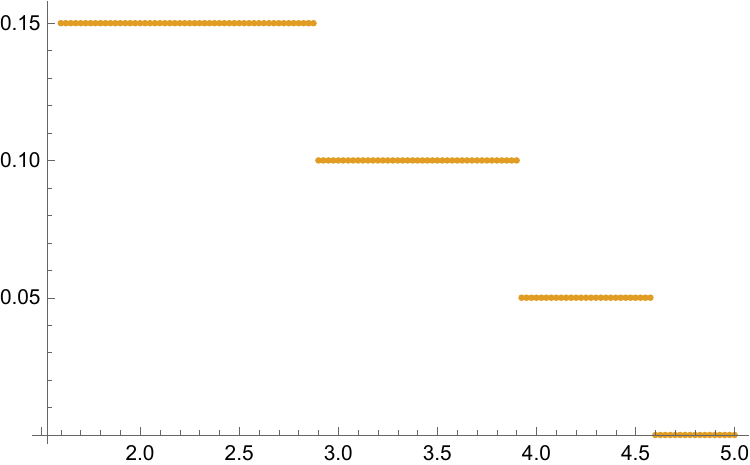}
        \caption{$t \mapsto \tilde{b}(2,t,1.6,5)$.}
        \label{fig:approxbarrier2}
    \end{subfigure}
    \caption{Approximated barrier strategies.}
    \label{fig:approxbarrier_both}
\end{figure}

\begin{remark}
    We observe in Figures \ref{fig:approxbarrier1} and \ref{fig:approxbarrier2} that the approximated optimal barrier is piecewise constant. In fact, the difference between consecutive values corresponds exactly to a single spatial step size $h_x$. This behavior arises from the numerical solution of the PDE, where one must carefully account for the relationship between the time step sizes $h_t=h_s$ and the spatial step size $h_x$. Consequently, a smaller spatial step size $h_x$ necessitates a proportionally smaller time step $h_t$, leading to more costly and time-intensive computations. Otherwise, oscillations which perturb the results may occur. Further refinements of the numerical scheme are desirable to enhance both accuracy and efficiency.
\end{remark}

\section*{Acknowledgments}
The authors thank Julia Eisenberg and Jean-Fran\c{c}ois Renaud for carefully reading the manuscript and for their valuable input.

\section*{Funding}

This research was funded in whole or in part by the Austrian Science Fund (FWF) [10.55776/P33317]. For open access purposes, the authors have applied a CC BY public copyright license to any author-accepted manuscript version arising from this submission.

\bibliographystyle{apalike}

\end{document}